\newtheorem{thm}{Theorem}[section]
\newtheorem{cor}[thm]{Corollary}
\newtheorem{lem}[thm]{Lemma}
\newtheorem{exam}[thm]{Example}
\numberwithin{equation}{section}
\begin{document}

\title{Group invertibility of the sum in rings and its applications}

\author{Huanyin Chen}
\author{Dayong Liu}
\author{Marjan Sheibani$^*$}
\address{
School of Mathematics\\ Hangzhou Normal University\\ Hangzhou, China}
\email{<huanyinchen@aliyun.com>}
\address{
College of Science \\ Central South University of Forestry and Technology, China}
\email{<liudy@csuft.edu.cn>}
\address{Farzanegan Campus, Semnan University, Semnan, Iran}
\email{<m.sheibani@semnan.ac.ir>}

\subjclass[2010]{15A09, 47A08, 16U99.} \keywords{group inverse; Drazin inverse; block operator matrix; Banach algebra; Banach space.}

\begin{abstract} We present new additive results for the group invertibility in a ring.
Then we apply our results to block operator matrices over Banach spaces and derive the existence of group inverses of $2\times 2$ block operator matrices. These
generalize many known results, e.g., Benitez, Liu and Zhu(Linear Multilinear Algebra, {\bf 59}(2011), 279--289) and Zhou, Chen and Zhu(Comm. Algebra, {\bf 48}(2020),676-690).
\end{abstract}

\thanks{Corresponding author: Marjan Sheibani}

\maketitle

\section{Introduction}

Let $R$ be a ring with an identity. An element $a\in R$ has Drazin inverse if there exists $x\in R$ such that $x=xax, ax=xa, a^n=a^{n+1}x$ for
some $n\in {\Bbb N}.$ Such $x$ is unique, if it exists, and we denote it by $a^D$. The smallest $n$ is called the Drazin index of $a$.
If $a$ has Drazin index $1$, $a$ is said to have group inverse $x$, and denote its group inverse by $a^{\#}$. As is well known, a square complex matrix $A$ has group inverse if and only if $rank(A)=rank(A^2)$. The group invertibility in a ring is attractive. It has interesting applications of resistance distances to the bipartiteness of graphs (see~\cite{SW}). Many authors have studied group invertibility from many different views, e.g., ~\cite{B2,BZ,C1,CE,Ca,MD2,M,P,ZM,Z2}. It was also extensively investigated under the concept "strongly regularity" in ring theory (see~\cite{CH}).

In ~\cite[Theorem 2.1]{B}, Benitez et al. studied the group inverse of the sum of two group invertible elements $a$ and $b$ in an algebra
under the condition $ab=0$. The group inverse of $P+Q$ of two group invertible complex matrices $P$ and $Q$ was investigated in~\cite[Theorem 2.3]{L} under the condition $PQQ^{\#}=QPP^{\#}$. Zhou et al. extended the preceding result to a Dedekind-finite ring in which $2$ is invertible (see~\cite[Theorem 3.1]{Z2}).
These motivate us to explore the group inverse of the sum in a general setting.

Let $a\in R^D$. The element $a^{\pi}=1-aa^{D}$ is called the spectral idempotent of $a$. In Section 2, we present new additive results for the group invertibility
by means of spectral idempotents in a ring. Let $a,b,ba^{\pi},ab^{\pi}\in R^{\#}$, $aba^{\pi}=0$ and $bab^{\pi}=0$. We prove that $a+b\in R^{\#}$ if and only if $aa^{\#}b+bb^{\#}a\in R^{\#}, a^{\pi}b^{\pi}a=0$ and $b^{\pi}a^{\pi}b=0$. The preceding known additive properties of the group invertibility are thereby extended to the wider case.

In Section 3, we further investigate the additive properties of the group invertibility under certain commutative-like conditions. Let $a,b\in R^{\#}$. If $a^2b=aba, b^2a=bab$, we prove that $a+b\in R^{\#}$ if and only if $1+a^{\#}b\in R^{\#}$ and $b^{\pi}a^{\pi}b=0$.

Let $X$ and $Y$ be Banach spaces, let $\mathcal{L}(X,Y)$ denote the set of all bounded linear operators from $X$ to $Y$ and $\mathcal{L}(X)$ denote the set of all bounded linear operators from $X$ to itself. The aim of the final section is to explore the the group invertibility of a block operator matrix $$M=\left(
\begin{array}{cc}
A&B\\
C&D
\end{array}
\right)~~~~~~~~~~(*)$$  where $A\in \mathcal{L}(X),B\in \mathcal{L}(X,Y),C\in \mathcal{L}(Y,X)$ and $D\in \mathcal{L}(Y)$. Here, $M$ is a bounded linear operator on $X\oplus Y$. This problem is quite complicated and was studied by many authors (see~~\cite{Ca,M,P,ZM}). In ~\cite{B}, Benitez studied the group inverse of $2\times 2$ block operator matrix $M$ under certain conditions. As applications of our results, we provide many new conditions under which $M$ has group inverse. These extend~\cite[Theorem 3.4-3.7]{B} to the general setting.

Throughout the paper, all rings are associative with an identity. Let $p\in R$ be an idempotent, and let $x\in R$. Then we write $x=pxp+px(1-p)+(1-p)xp+(1-p)x(1-p),$
and induce a Pierce representation given by the matrix
$$x=\left(\begin{array}{cc}
pxp&px(1-p)\\
(1-p)xp&(1-p)x(1-p)
\end{array}
\right)_p.$$

\section{additive properties}

The purpose of this section is to establish new additive results for group inverses in a ring. We begin with

\begin{lem} Let $p\in R$ be an idempotent and $x=\left(
\begin{array}{cc}
a&0\\
c&d
\end{array}
\right)_p.$ If $a,d\in R^{\#}$, then $x\in R^{\#}$ if and only if $d^{\pi}ca^{\pi}=0.$ In this case, $x^{\#}=\left(
\begin{array}{cc}
a^{\#}&0\\
u&d^{\#}
\end{array}
\right)_p,$ where $$u=d^{\pi}c(a^{\#})^2+(d^{\#})^2ca^{\pi}-d^{\#}ca^{\#}.$$\end{lem}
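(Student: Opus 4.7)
The biconditional decomposes into a direct matrix computation in each direction, and the key structural observation is that the necessity of $d^{\pi}ca^{\pi}=0$ requires only the weaker consequence $x\in x^{2}R$ of $x\in R^{\#}$, while the explicit form of $x^{\#}$ arises from straightforward block verification.

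\emph{Necessity.} Suppose $x\in R^{\#}$, so in particular there exists $s$ with $x=x^{2}s$. Writing $s=\left(\begin{array}{cc} s_{1} & s_{2} \\ s_{3} & s_{4}\end{array}\right)_{p}$ and expanding blockwise, the $(1,1)$-entry of $x=x^{2}s$ gives $a=a^{2}s_{1}$, which after premultiplication by $a^{\#}$ yields $as_{1}=aa^{\#}$. The $(2,1)$-entry gives $c=(ca+dc)s_{1}+d^{2}s_{3}$; premultiplying by $d^{\pi}$ annihilates the $d^{2}s_{3}$ and $dc\,s_{1}$ summands (using $d^{\pi}d=0$), leaving $d^{\pi}c=d^{\pi}cas_{1}$. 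Postmultiplying by $a^{\pi}$ and invoking $as_{1}a^{\pi}=aa^{\#}a^{\pi}=0$ forces $d^{\pi}ca^{\pi}=0$.

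\emph{Sufficiency.} Assume $d^{\pi}ca^{\pi}=0$ and set $y=\left(\begin{array}{cc} a^{\#} & 0 \\ u & d^{\#}\end{array}\right)_{p}$ with $u=d^{\pi}c(a^{\#})^{2}+(d^{\#})^{2}ca^{\pi}-d^{\#}ca^{\#}$, then verify $xy=yx$, $xyx=x$, and $yxy=y$ block by block; uniqueness of the group inverse will then identify $y$ with $x^{\#}$. The diagonal blocks of $xy$ and $yx$ collapse to $aa^{\#}$ and $dd^{\#}$ by standard group-inverse identities. The commutativity $xy=yx$ reduces to the $(2,1)$-identity $ca^{\#}+du=ua+d^{\#}c$; expanding $du$ and $ua$ using $dd^{\pi}=0$, $d(d^{\#})^{2}=d^{\#}$, $(a^{\#})^{2}a=a^{\#}$, $a^{\pi}a=0$, both sides collapse to $ca^{\#}+d^{\#}c-d^{\#}caa^{\#}-dd^{\#}ca^{\#}$, with no hypothesis needed. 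Likewise, $yxy=y$ in the $(2,1)$-slot reduces to $uaa^{\#}+d^{\#}ca^{\#}+d^{\#}du=u$, which again holds unconditionally via $a^{\pi}aa^{\#}=0$, $d^{\#}d\cdot d^{\pi}=0$, and the analogous identities. The assumption $d^{\pi}ca^{\pi}=0$ enters only in verifying $xyx=x$: the $(2,1)$-block evaluates to $caa^{\#}+dd^{\#}c-dd^{\#}caa^{\#}$, and equating this to $c$ is precisely $(1-dd^{\#})c(1-aa^{\#})=0$.

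The main obstacle is nothing more than careful bookkeeping: each verification is an expansion of products of $2\times 2$ block matrices with repeated applications of $aa^{\#}a=a$, $a^{\#}aa^{\#}=a^{\#}$, $a^{\#}a=aa^{\#}$, $a\cdot a^{\pi}=0$, and analogues for $d$. The astute feature is the choice of $u$ as a sum of three terms, each supported on a distinct component of the orthogonal decompositions $1=aa^{\#}+a^{\pi}$ and $1=dd^{\#}+d^{\pi}$; this orthogonality makes the commutativity and inner-inverse axioms fall out automatically and isolates the entire hypothesis to the single equation $xyx=x$.
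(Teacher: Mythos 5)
Your verification is correct, and all the block computations check out: necessity follows from the weak consequence $x\in x^{2}R$ of group invertibility (the $(1,1)$ and $(2,1)$ entries of $x=x^{2}s$ give $as_{1}=aa^{\#}$ and $d^{\pi}c=d^{\pi}cas_{1}$, hence $d^{\pi}ca^{\pi}=0$), while for sufficiency the candidate $y$ satisfies $xy=yx$ and $yxy=y$ unconditionally and $xyx=x$ exactly when $d^{\pi}ca^{\pi}=0$, so the formula for $x^{\#}$ follows by uniqueness. Note, however, that the paper does not prove this lemma at all: its ``proof'' is a one-line deferral to \cite[Theorem 2.1]{MD}, so your argument is not so much a different route as a complete, self-contained replacement for an omitted proof; a common alternative (and the flavour of the cited source) is to first produce the Drazin inverse of the triangular block matrix and then characterize when the index is $1$, whereas your approach verifies the three group-inverse axioms directly and cleanly isolates where the hypothesis is used. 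One small point worth making explicit: since $a\in pRp$ and $d\in (1-p)R(1-p)$, the spectral idempotents in the lemma should be read relative to the corner rings ($a^{\pi}=p-aa^{\#}$, $d^{\pi}=(1-p)-dd^{\#}$); this is harmless in your computations because $c=(1-p)cp$ absorbs $p$ on the right and $1-p$ on the left, so expressions such as $d^{\pi}ca^{\pi}$ and the decompositions $c=(dd^{\#}+d^{\pi})c$, $ca^{\#}p=ca^{\#}$ are the same under either reading, but it deserves a sentence so that identities like $aa^{\#}+a^{\pi}=p$ are used consistently.
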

\begin{proof} We obtain the result as in the proof of~\cite[Theorem 2.1]{MD}.\end{proof}

\begin{lem} Let $b\in R^{\#}$ and $p\in R$ be an idempotent. If $pbp^{\pi}=0$ and $bp^{\pi}\in R^{\#}$, then $(bp^{\pi})^{\#}=b^{\#}p^{\pi}$ and $(pb)^{\#}=pb^{\#}$.\end{lem}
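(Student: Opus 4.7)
The plan is to work in the Peirce decomposition of $R$ with respect to $p$. The hypothesis $pbp^{\pi}=0$ is equivalent to the two statements $bp^{\pi}=p^{\pi}bp^{\pi}$ and $pb=pbp$, so $b$ is block lower triangular:
\[
b=\begin{pmatrix} a_1 & 0 \\ c_1 & d_1 \end{pmatrix}_p,\qquad a_1=pbp,\ c_1=p^{\pi}bp,\ d_1=p^{\pi}bp^{\pi}.
\]
Consequently $bp^{\pi}=d_1$ sits in the $(2,2)$-block and $pb=a_1$ sits in the $(1,1)$-block. Writing $b^{\#}=\begin{pmatrix}\alpha' & \beta' \\ \gamma' & \delta'\end{pmatrix}_p$, the two claimed identities reduce to the three assertions $\beta'=pb^{\#}p^{\pi}=0$, $\delta'=d_1^{\#}$ and $\alpha'=a_1^{\#}$; as a by-product this will also yield $pb\in R^{\#}$. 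The hypothesis $bp^{\pi}\in R^{\#}$ is just $d_1\in R^{\#}$, so I may freely use the group inverse $d_1^{\#}$ of $d_1$ in $R$.

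The main obstacle will be proving $\beta'=0$. My approach is to chase the $(1,2)$-Peirce entry through the defining identities of $b^{\#}$. Reading off $(1,2)$-entries, $bb^{\#}=b^{\#}b$ gives $a_1\beta'=\beta' d_1$ and $bb^{\#}b=b$ gives $a_1\beta' d_1=0$; combined, these yield $\beta' d_1^{2}=0$. Since $d_1\in R^{\#}$, one has $d_1=d_1^{2}d_1^{\#}$, so $\beta' d_1=\beta' d_1^{2}d_1^{\#}=0$. I will then use the identity $b^{\#}=(b^{\#})^{2}b$ together with $bp^{\pi}=p^{\pi}bp^{\pi}$ to write
\[
\beta'=pb^{\#}p^{\pi}=p(b^{\#})^{2}\,bp^{\pi}=\bigl(p(b^{\#})^{2}p^{\pi}\bigr)\,d_1.
\]
Right-multiplying this identity by $d_1$ and invoking $\beta' d_1=0$ forces $\bigl(p(b^{\#})^{2}p^{\pi}\bigr)d_1^{2}=0$, and one more application of $d_1=d_1^{2}d_1^{\#}$ gives $\bigl(p(b^{\#})^{2}p^{\pi}\bigr)d_1=0$, that is, $\beta'=0$.

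With $\beta'=0$ in hand, the rest follows by routine block arithmetic. The $(2,2)$-entries of $bb^{\#}=b^{\#}b$, $bb^{\#}b=b$ and $b^{\#}bb^{\#}=b^{\#}$ reduce to $\delta' d_1=d_1\delta'$, $d_1\delta' d_1=d_1$ and $\delta' d_1\delta'=\delta'$, so uniqueness of the group inverse forces $\delta'=d_1^{\#}$; the $(1,1)$-entries of the same three identities yield the analogous relations for $\alpha'$ and $a_1$, so $a_1\in R^{\#}$ with $\alpha'=a_1^{\#}$. Reading $b^{\#}p^{\pi}$ and $pb^{\#}$ off in block form now produces
\[
b^{\#}p^{\pi}=\begin{pmatrix}0 & 0 \\ 0 & d_1^{\#}\end{pmatrix}_p=(bp^{\pi})^{\#},\qquad pb^{\#}=\begin{pmatrix}a_1^{\#} & 0 \\ 0 & 0\end{pmatrix}_p=(pb)^{\#},
\]
completing the proof.
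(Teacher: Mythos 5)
Your proof is correct, and it reaches the conclusion by a genuinely more self-contained route than the paper. Both arguments begin identically: $pbp^{\pi}=0$ makes $b$ lower triangular in the Peirce decomposition relative to $p$, with diagonal blocks $pb$ and $bp^{\pi}$. At that point the paper simply appeals to Lemma 2.1 (i.e.\ to the triangular-block result of Mihajlovi\'c and Djordjevi\'c~\cite{MD}) to conclude that, $b$ and $bp^{\pi}$ being group invertible, $pb$ is group invertible and $b^{\#}$ is again lower triangular with diagonal entries $(pb)^{\#}$ and $(bp^{\pi})^{\#}$, from which the two identities are read off; strictly speaking this is the converse direction of Lemma 2.1 as stated (which assumes both diagonal blocks are group invertible), so that citation is not a literal application. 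You instead write $b^{\#}$ with unknown Peirce entries and derive the same structure directly from the defining equations of the group inverse: the $(1,2)$-entries of $bb^{\#}=b^{\#}b$ and $bb^{\#}b=b$ give $a_1\beta'=\beta'd_1$ and $a_1\beta'd_1=0$, hence $\beta'd_1^2=0$ and, via $d_1=d_1^2d_1^{\#}$, $\beta'd_1=0$; combined with $\beta'=p(b^{\#})^2bp^{\pi}=\bigl(p(b^{\#})^2p^{\pi}\bigr)d_1$ this forces $\beta'=0$, and uniqueness of group inverses then identifies the diagonal corners, yielding $pb\in R^{\#}$, $(pb)^{\#}=pb^{\#}$ and $(bp^{\pi})^{\#}=b^{\#}p^{\pi}$. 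All of these computations check out. The paper's route buys brevity by outsourcing the block structure of $b^{\#}$ to the cited lemma; yours buys an elementary argument that needs no external block-matrix theorem and in fact supplies exactly the statement the paper's one-line invocation of Lemma 2.1 glosses over.
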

\begin{proof} Since $(1-p)bp=0$, we have $b=\left(
\begin{array}{cc}
pb&0\\
p^{\pi}bp&bp^{\pi}
\end{array}
\right)_p.$ Since $b,bp^{\pi}\in R^{\#}$, it follows by Lemma 2.1 that $pb\in R^{\#}$.
In this case, we have $$b^{\#}=\left(
\begin{array}{cc}
(pb)^{\#}&0\\
*&(bp^{\pi})^{\#}
\end{array}
\right)_p.$$ Therefore
$(pb)^{\#}=pb^{\#}$ and $(bp^{\pi})^{\#}=b^{\#}p^{\pi}$, as asserted.\end{proof}

\begin{thm} Let $a,b,ba^{\pi}\in R^{\#}$ and $aba^{\pi}=0$. Then $a+b\in R^{\#}$ if and only if $a(1+a^{\#}b)\in R^{\#}$ and $b^{\pi}a^{\pi}b=0$.\end{thm}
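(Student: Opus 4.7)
My plan is to Pierce-decompose $a+b$ with respect to the idempotent $p:=aa^\#$ and then invoke Lemma~2.1. Because $a$ is a unit in the corner $pRp$ with inverse $a^\#$, the hypothesis $aba^\pi=0$ is equivalent to $pbp^\pi=0$: one direction by left-multiplying with $a^\#$, the reverse by left-multiplying with $a$. Consequently both $b$ and $a+b$ are block lower triangular with respect to $p$, and
$$a+b=\left(\begin{array}{cc}\alpha & 0\\ \gamma & \beta\end{array}\right)_p,\qquad \alpha=a+pbp=a(1+a^\# b),\ \gamma=p^\pi bp,\ \beta=p^\pi bp^\pi=ba^\pi,$$
with the last equality using $pbp^\pi=0$. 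Since $pbp^\pi=0$ and $bp^\pi=ba^\pi\in R^\#$, Lemma~2.2 applied to $b$ with this $p$ yields the explicit identifications $\beta^\#=b^\# a^\pi$ and $(pbp)^\#=pb^\#$; a short block expansion of $bb^\#$ from this data then gives the translation identity
$$b^\pi a^\pi b\;=\;\beta^\pi\gamma,$$
which converts the stated criterion into the language internal to the block decomposition.

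For the implication ($\Leftarrow$), the hypothesis $b^\pi a^\pi b=0$ becomes $\beta^\pi\gamma=0$, so certainly $\beta^\pi\gamma\alpha^\pi=0$; Lemma~2.1, whose required premises $\alpha,\beta\in R^\#$ are now in hand, concludes that $a+b\in R^\#$ and delivers the explicit formula for $(a+b)^\#$. For the implication ($\Rightarrow$), assuming $a+b\in R^\#$, I would apply Lemma~2.2 a second time, now to the element $a+b$ itself: since $p(a+b)p^\pi=pap^\pi+pbp^\pi=0$ and $(a+b)p^\pi=ba^\pi\in R^\#$, the lemma identifies $\alpha=p(a+b)$ as group invertible. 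The remaining and most delicate step is to upgrade Lemma~2.1's immediate conclusion $\beta^\pi\gamma\alpha^\pi=0$ to the full annihilation $\beta^\pi\gamma=0$ (equivalently $b^\pi a^\pi b=0$); this is the main obstacle, and would be handled by combining this with the identity $\beta^\pi\gamma\, b_1^\pi=0$ (from Lemma~2.1 applied to $b$ itself, where $b_1=pbp$) and the specific form $\alpha=a+b_1$ inside the corner $pRp$ to propagate the annihilation from $\alpha^\pi$ to all of $p$.
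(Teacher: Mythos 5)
Your setup is sound and, up to notation, is the paper's own: with $p=aa^\#$ you correctly get $pbp^\pi=0$, the lower triangular form of $a+b$, the identifications $\beta^\#=b^\#a^\pi$ and $(pb)^\#=pb^\#$ from Lemma~2.2, the translation identity $b^\pi a^\pi b=\beta^\pi\gamma$ (which does hold, since $b^\pi a^\pi ba^\pi=b^\pi ba^\pi=0$), the backward implication via Lemma~2.1, and the step that $a+b\in R^\#$ forces $\alpha=a(1+a^\#b)\in R^\#$ via Lemma~2.2. The genuine gap is exactly the step you flag and leave open, and it cannot be closed: the two relations $\beta^\pi\gamma\,\alpha^\pi=0$ and $\beta^\pi\gamma\,b_1^\pi=0$ do not propagate to $\beta^\pi\gamma=0$. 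If $b_1=pbp$ and $\alpha=a+b_1$ are invertible in the corner $pRp$, both relations are vacuous while $\beta^\pi\gamma$ may be nonzero. Concretely, in $R=M_2(\Bbb C)$ take $a=\left(\begin{smallmatrix}1&0\\0&0\end{smallmatrix}\right)$, $b=\left(\begin{smallmatrix}1&0\\1&0\end{smallmatrix}\right)$: both are idempotents, $ba^\pi=0\in R^\#$, $aba^\pi=0$, $a(1+a^\#b)=\mathrm{diag}(2,0)\in R^\#$, and $a+b=\left(\begin{smallmatrix}2&0\\1&0\end{smallmatrix}\right)$ is group invertible, yet $b^\pi a^\pi b=\left(\begin{smallmatrix}0&0\\1&0\end{smallmatrix}\right)\neq 0$. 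So the ``only if'' half of the statement is itself false, and no completion of your sketch can succeed.

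You should also know that what blocked you is a flaw in the paper, not a trick you missed. In the paper's proof of this direction the $(2,1)$ Pierce entry of $(a+b)^\pi$ is recorded as $-bw^\#-ba^\pi u$ (with $w=\alpha$) and is then killed by left multiplication with $b^\pi$; but the true entry is $-a^\pi bw^\#-ba^\pi u$, and $b^\pi a^\pi bw^\#w$ need not vanish. Carried out correctly, that computation only returns $b^\pi a^\pi b\,aa^\#=b^\pi a^\pi b\,\alpha\alpha^\#$, i.e.\ the Lemma~2.1 condition $\beta^\pi\gamma\,\alpha^\pi=0$ once again (as the example above confirms, where $\alpha^\pi$ annihilates $\beta^\pi\gamma$ but $\beta^\pi\gamma\neq0$). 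The statement your argument actually establishes, and which is the correct version of the theorem, is: under the stated hypotheses, $a+b\in R^\#$ if and only if $a(1+a^\#b)\in R^\#$ and $b^\pi a^\pi b\,[a(1+a^\#b)]^\pi=0$; your Lemma~2.2 and Lemma~2.1 steps prove both directions of this equivalence.
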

\begin{proof} Let $p=aa^{\#}$. Since $aba^{\pi}=0$, we see that $aa^{\#}ba^{\pi}=0$. Then $$a=\left(
\begin{array}{cc}
a_1&0\\
0&0
\end{array}
\right)_p, b=\left(
\begin{array}{cc}
b_1&0\\
b_3&b_4
\end{array}
\right)_p.$$ Here $a_1=aa^{\#}aaa^{\#}=a, b_3=a^{\pi}baa^{\#}, b_4=a^{\pi}ba^{\pi}=ba^{\pi}$.
Then $$a+b=\left(
\begin{array}{cc}
a_1+b_1&0\\
b_3&b_4
\end{array}
\right)_p.$$

$\Longrightarrow$ By hypothesis, $b_4=ba^{\pi}\in R^{\#}$. Since $aba^{\pi}=0$, it follows by
Lemma 2.2 that $b_4^{\#}=b^{\#}a^{\pi}$. Clearly, $aa^{\#}(a+b)a^{\pi}=0$ and $(a+b)a^{\pi}=ba^{\pi}\in R^{\#}$. Since $a+b\in R^{\#}$, it follows by Lemma 2.2 that $aa^{\#}(a+b)\in R^{\#}$ and $[aa^{\#}(a+b)]^{\#}=aa^{\#}(a+b)^{\#}$. Therefore $aa^{\#}(a+b)=a_1+b_1$ has group inverse. In view of Lemma 2.1, we have
$$\begin{array}{lll}
(a+b)^{\#}&=&\left(
\begin{array}{cc}
w^{\#}&0\\
u&b^{\#}a^{\pi}
\end{array}
\right)_p,
\end{array}$$ where $w=aa^{\#}(a+b)$ and $$\begin{array}{lll}
u&=&(b_4^{\#})^2b_3w^{\pi}+b_4^{\pi}b_3(w^{\#})^2-b_4^{\#}b_3w^{\#}\\
&=&b^{\#}a^{\pi}b^{\#}a^{\pi}baa^{\#}w^{\pi}+b^{\pi}a^{\pi}b(w^{\#})^2-b^{\#}a^{\pi}bw^{\#}.
\end{array}$$
We check that
$$\begin{array}{ll}
&(a+b)^{\pi}(a+b)\\
=&\left(
\begin{array}{cc}
w^{\pi}&0\\
-bw^{\#}-ba^{\pi}u&b^{\pi}a^{\pi}
\end{array}
\right)_p\left(
\begin{array}{cc}
w&0\\
a^{\pi}baa^{\#}&ba^{\pi}
\end{array}
\right)_p\\
=&\left(
\begin{array}{cc}
w^{\pi}w&0\\
-(bw^{\#}+ba^{\pi}u)w+b^{\pi}a^{\pi}baa^{\#}&b^{\pi}a^{\pi}ba^{\pi}\\
\end{array}
\right)_p\\
=&0.
\end{array}$$ Hence, $-(bw^{\#}+ba^{\pi}u)w+b^{\pi}a^{\pi}baa^{\#}=0$. Thus $$b^{\pi}a^{\pi}baa^{\#}=b^{\pi}[-(bw^{\#}+ba^{\pi}u)w+b^{\pi}a^{\pi}baa^{\#}]=0.$$
On the other hand, $b^{\pi}a^{\pi}ba^{\pi}=b^{\pi}ba^{\pi}-b^{\pi}a^{\#}aba^{\pi}=0$. Therefore $b^{\pi}a^{\pi}b=b^{\pi}a^{\pi}b[aa^{\#}+a^{\pi}]=0$, as required.

$\Longleftarrow$ We easily check that
$$\begin{array}{lll}
b_4^{\pi}b_3(a_1+b_1)^{\pi}&=&(1-b^{\#}a^{\pi}ba^{\pi})b_3(a_1+b_1)^{\pi}\\
&=&(a^{\pi}-b^{\#}ba^{\pi})b_3(a_1+b_1)^{\pi}\\
&=&[b^{\pi}a^{\pi}b]aa^{\#}(a_1+b_1)^{\pi}\\
&=&0.
\end{array}$$ According to Lemma 2.1, $a+b\in R^{\#}$. Moreover, we have
$$\begin{array}{lll}
(a+b)^{\#}&=&(a+b)^D\\
&=&w^{\#}+u+b^{\#}a^{\pi}\\
&=&w^{\#}+b^{\#}a^{\pi}b^{\#}a^{\pi}baa^{\#}w^{\pi}-b^{\#}a^{\pi}bw^{\#}+b^{\#}a^{\pi}\\
&=&[a(1+a^{\#}b)]^{\#}+b^{\#}a^{\pi}b^{\#}a^{\pi}baa^{\#}[a(1+a^{\#}b)]^{\pi}\\
&-&b^{\#}a^{\pi}b[a(1+a^{\#}b)]^{\#}+b^{\#}a^{\pi},
\end{array}$$ as asserted.\end{proof}

\begin{cor} Let $a,b,a^{\pi}b\in R^{\#}$ and $a^{\pi}ba=0$. Then $a+b\in R^{\#}$ if and only if $(1+ba^{\#})a\in R^{\#}$ and $ba^{\pi}b^{\pi}=0$. In this case, $$\begin{array}{lll}
(a+b)^{\#}&=&[(1+ba^{\#})a]^{\#}+[(1+ba^{\#})a]^{\pi}aa^{\#}ba^{\pi}b^{\#}a^{\pi}b^{\#}\\
&-&[(1+ba^{\#})a]^{\#}ba^{\pi}b^{\#}+a^{\pi}b^{\#}.\\
\end{array}$$\end{cor}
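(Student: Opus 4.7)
The plan is to observe that this corollary is the left--right dual of Theorem 2.3, and to obtain it by applying Theorem 2.3 in the opposite ring $R^{op}$. First I would record the fact that group invertibility is preserved under passage to $R^{op}$: the defining identities $x=xax$, $ax=xa$ and $a=a^2x$ are symmetric under reversing multiplication, so $a$ is group invertible in $R$ if and only if it is group invertible in $R^{op}$, with the same element serving as the group inverse. In particular the spectral idempotent $a^{\pi}=1-aa^{\#}$ is unchanged.

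Next I would translate each hypothesis and conclusion by reversing every product. Under this dualization, the element $ba^{\pi}$ taken in $R^{op}$ corresponds to $a^{\pi}b$ in $R$, the condition $aba^{\pi}=0$ corresponds to $a^{\pi}ba=0$, the element $a(1+a^{\#}b)$ corresponds to $(1+ba^{\#})a$, and the condition $b^{\pi}a^{\pi}b=0$ corresponds to $ba^{\pi}b^{\pi}=0$. Applying Theorem 2.3 in $R^{op}$ with these translations yields precisely the equivalence claimed in the corollary.

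For the explicit formula, I would reverse, term by term, the expression for $(a+b)^{\#}$ furnished by Theorem 2.3, namely
\begin{equation*}
[a(1+a^{\#}b)]^{\#}+b^{\#}a^{\pi}b^{\#}a^{\pi}baa^{\#}[a(1+a^{\#}b)]^{\pi}-b^{\#}a^{\pi}b[a(1+a^{\#}b)]^{\#}+b^{\#}a^{\pi}.
\end{equation*}
Reading each product right-to-left produces the four summands stated in the corollary, in the same order. In particular the second term $b^{\#}a^{\pi}b^{\#}a^{\pi}baa^{\#}[a(1+a^{\#}b)]^{\pi}$ reverses to $[(1+ba^{\#})a]^{\pi}aa^{\#}ba^{\pi}b^{\#}a^{\pi}b^{\#}$, which is exactly the middle term on the right-hand side of the asserted identity.

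The only step that requires any care is the bookkeeping of this string reversal on the longer middle term; it is routine but must be written out carefully so that idempotents and group inverses end up in the correct positions. Apart from that, no ingredient beyond Theorem 2.3 and the fact that $(R^{op})^{\#}=R^{\#}$ with the same spectral idempotents is needed.
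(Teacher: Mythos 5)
Your proposal is correct and follows essentially the same route as the paper, which likewise proves the corollary by applying Theorem 2.3 to the opposite ring $(R,*)$ with $a*b=b\cdot a$; your translation of the hypotheses ($a^{\pi}b$, $a^{\pi}ba=0$, $(1+ba^{\#})a$, $ba^{\pi}b^{\pi}=0$) and the term-by-term reversal of the formula are exactly the intended bookkeeping, carried out in more detail than the paper records.
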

\begin{proof} Since $(R,\cdot)$ is a ring, $(R,*)$ is a ring with the multiplication $a*b=b\cdot a$, i.e., it is the opposite ring. Then we complete the proof by applying Theorem 2.3 to the ring $(R,*)$.\end{proof}

Now our first major result is demonstrated as follows.

\begin{thm} Let $a,b,ba^{\pi},ab^{\pi}\in R^{\#}$, $aba^{\pi}=0$ and $bab^{\pi}=0$. Then $a+b\in R^{\#}$ if and only if $aa^{\#}b+bb^{\#}a\in R^{\#},
a^{\pi}b^{\pi}a=0$ and $b^{\pi}a^{\pi}b=0$.\end{thm}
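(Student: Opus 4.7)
The strategy is to reduce Theorem 2.5 to Theorem 2.3, applied to both orderings $(a,b)$ and $(b,a)$. The hypotheses $a,b,ba^{\pi}\in R^{\#}$ together with $aba^{\pi}=0$ precisely match Theorem 2.3 in the form $(a,b)$, while by symmetry the hypotheses $b,a,ab^{\pi}\in R^{\#}$ with $bab^{\pi}=0$ match it in the form $(b,a)$. Each invocation will exchange group-invertibility of $a+b$ for group-invertibility of $a+aa^{\#}b$ (respectively $b+bb^{\#}a$), together with a spectral identity $b^{\pi}a^{\pi}b=0$ (respectively $a^{\pi}b^{\pi}a=0$). I will also repeatedly use the easy observation that $a^{\pi}b^{\pi}a=0$ forces $a^{\pi}bb^{\#}a=0$ (since $a^{\pi}a=0$), and symmetrically $b^{\pi}a^{\pi}b=0$ forces $b^{\pi}aa^{\#}b=0$.

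For the ``only if'' direction the plan is: from $a+b\in R^{\#}$, Theorem 2.3 applied to the two orderings immediately yields $b^{\pi}a^{\pi}b=0$, $a^{\pi}b^{\pi}a=0$, and $b+bb^{\#}a\in R^{\#}$. Since $a^{\pi}bb^{\#}a=0$, one verifies that $aa^{\#}(b+bb^{\#}a)=aa^{\#}b+bb^{\#}a$, so it remains to show the left-hand side is group invertible. I would invoke Lemma 2.2 with $p=aa^{\#}$ and element $b+bb^{\#}a$: the equation $p(b+bb^{\#}a)p^{\pi}=0$ follows from $aa^{\#}ba^{\pi}=0$ (a direct consequence of $aba^{\pi}=0$) together with $aa^{\pi}=0$, and $(b+bb^{\#}a)p^{\pi}=ba^{\pi}\in R^{\#}$ is already given. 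Lemma 2.2 then delivers $aa^{\#}(b+bb^{\#}a)=aa^{\#}b+bb^{\#}a\in R^{\#}$.

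For the converse, assume the three right-hand conditions. My plan is to establish $b+bb^{\#}a\in R^{\#}$ and then invoke the $\Leftarrow$ part of Theorem 2.3 applied to $(b,a)$. Pierce-decomposing $b+bb^{\#}a$ with respect to $p=aa^{\#}$, and using $aa^{\#}ba^{\pi}=0$ together with $a^{\pi}bb^{\#}a=0$ to kill the off-diagonal and bottom-right nuisance terms, one obtains a lower-triangular form with $(1,1)$-block $aa^{\#}b+bb^{\#}a$, $(2,2)$-block $ba^{\pi}$, and $(2,1)$-block $a^{\pi}baa^{\#}$. The hypotheses supply group-invertibility of the two diagonal blocks, so Lemma 2.1 reduces the matter to the corner condition $(ba^{\pi})^{\pi}\cdot a^{\pi}baa^{\#}\cdot(aa^{\#}b+bb^{\#}a)^{\pi}=0$.

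The main obstacle is this corner verification. The decisive step is to simplify $(ba^{\pi})^{\pi}$: Lemma 2.2 gives $(ba^{\pi})^{\#}=b^{\#}a^{\pi}$, and combined with the identity $a^{\pi}ba^{\pi}=ba^{\pi}$ (from $aa^{\#}ba^{\pi}=0$) one computes $(ba^{\pi})(ba^{\pi})^{\#}=b^{\#}a^{\pi}\cdot ba^{\pi}=b^{\#}\cdot a^{\pi}ba^{\pi}=b^{\#}ba^{\pi}=bb^{\#}a^{\pi}$, so that $(ba^{\pi})^{\pi}=1-bb^{\#}a^{\pi}$. The corner expression then collapses: $(ba^{\pi})^{\pi}\cdot a^{\pi}baa^{\#}=(1-bb^{\#})a^{\pi}baa^{\#}=b^{\pi}a^{\pi}b\cdot aa^{\#}=0$ by the hypothesis $b^{\pi}a^{\pi}b=0$, and this is already zero before hitting $(aa^{\#}b+bb^{\#}a)^{\pi}$. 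Lemma 2.1 now yields $b+bb^{\#}a\in R^{\#}$, and Theorem 2.3 applied to $(b,a)$ finishes the argument. Without this particular simplification of $(ba^{\pi})^{\pi}$, the corner condition is not visibly forced by the stated hypotheses, so this is the only delicate point in the whole plan.
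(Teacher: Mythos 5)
Your argument is correct, and it reaches Theorem 2.5 by a genuinely different execution of the same general framework. The paper also reduces to Theorem 2.3, but then performs its Pierce analysis on the element $a+aa^{\#}b$ relative to the idempotent $q=aa^{\#}bb^{\#}$, obtaining the diagonal blocks $aa^{\#}b+bb^{\#}a$ and $ab^{\pi}$ and checking the corner condition there in both directions. You instead exploit the symmetry of the hypotheses by invoking Theorem 2.3 for both orderings $(a,b)$ and $(b,a)$: in the forward direction this hands you $b^{\pi}a^{\pi}b=0$, $a^{\pi}b^{\pi}a=0$ and $b+bb^{\#}a\in R^{\#}$ at once, and a single application of Lemma 2.2 with $p=aa^{\#}$ (its hypotheses $aa^{\#}(b+bb^{\#}a)a^{\pi}=0$ and $(b+bb^{\#}a)a^{\pi}=ba^{\pi}\in R^{\#}$ do hold) gives $aa^{\#}b+bb^{\#}a\in R^{\#}$ directly, which is slicker than the paper's second block computation; in the converse you decompose $b+bb^{\#}a$ relative to $p=aa^{\#}$, with diagonal blocks $aa^{\#}b+bb^{\#}a$ and $ba^{\pi}$, rather than the paper's decomposition of $a+aa^{\#}b$ relative to $q$. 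Your block identifications all check out (they use exactly $aa^{\#}ba^{\pi}=0$, $aa^{\pi}=0$ and $a^{\pi}bb^{\#}a=0$, the last being equivalent to $a^{\pi}b^{\pi}a=0$), and the delicate point you flag is handled correctly: $(ba^{\pi})^{\#}=b^{\#}a^{\pi}$ from Lemma 2.2 together with $a^{\pi}ba^{\pi}=ba^{\pi}$ gives $(ba^{\pi})^{\pi}=1-bb^{\#}a^{\pi}$, so the corner term collapses to $b^{\pi}a^{\pi}baa^{\#}=0$. What your route buys is a more symmetric proof in which both spectral conditions and both sum conditions arise from the two applications of Theorem 2.3; what the paper's route buys is a single decomposition that also yields the explicit formula for $[a(1+a^{\#}b)]^{\#}$ (and hence for $(a+b)^{\#}$ via Theorem 2.3), which your existence-only argument does not record.
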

\begin{proof} In view of Theorem 2.3, $a+b\in R^{\#}$ if and only if $a+aa^{\#}b=a(1+a^{\#}a)\in R^{\#}$ and $b^{\pi}a^{\pi}b=0$.
Since $aa^{\#}ba^{\pi}=0$ and $ba^{\pi}\in R^D$, it follows by Lemma 2.2 that
$(aa^{\#}b)^{\#}=aa^{\#}b^{\#}$. Let $q=[aa^{\#}b][aa^{\#}b]^{\#}$. Then $q=aa^{\#}bb^{\#}$.
Obviously, we have $qa(1-q)=aa^{\#}bb^{\#}a(1-aa^{\#}bb^{\#})=aa^{\#}bb^{\#}ab^{\pi}=0$. Then $$a=\left(
\begin{array}{cc}
a_1&0\\
a_3&a_4
\end{array}
\right)_q, aa^{\#}b=\left(
\begin{array}{cc}
b_1&0\\
0&0
\end{array}
\right)_q.$$ Here $a_1=qaq, a_3=q^{\pi}aq, a_4=q^{\pi}aq^{\pi}$.
Then $$a+aa^{\#}b=\left(
\begin{array}{cc}
a_1+b_1&0\\
a_3&a_4
\end{array}
\right)_q.$$ Here, we have
$$\begin{array}{rll}
a_4&=&(1-aa^{\#}bb^{\#})a(1-aa^{\#}bb^{\#})\\
&=&(1-aa^{\#}bb^{\#})ab^{\pi}\\
&=&ab^{\pi}\\
&\in&R^{\#}.
\end{array}$$

$\Longrightarrow$ Since $aba^{\pi}=0$, it follows by Theorem 2.3 that $a^{\pi}b^{\pi}a=0$. Hence,
$$\begin{array}{lll}
a_1+b_1&=&qaq+qbq\\
&=&aa^{\#}bb^{\#}abb^{\#}+aa^{\#}b^2b^{\#}aa^{\#}bb^{\#}\\
&=&aa^{\#}bb^{\#}a+aa^{\#}b\\
&=&bb^{\#}a-a^{\pi}bb^{\#}a+aa^{\#}b\\
&=&aa^{\#}b+bb^{\#}a.
\end{array}$$
Since $bab^{\pi}=0$, it follows by
Lemma 2.2 that $a_4^{\#}=a^{\#}b^{\pi}$ and $a_4^{\pi}=q^{\pi}-aa^{\#}b^{\pi}=a^{\pi}$. Moreover, we have
$a_3=q^{\pi}aq=(1-aa^{\#}bb^{\#})a^2a^{\#}bb^{\#}=abb^{\#}-aa^{\#}bb^{\#}a$. Hence $a^{\pi}a_3=0$.

By hypothesis, $ba^{\pi}\in R^{\#}$. Since $a+b\in R^{\#}$, it follows by Theorem 2.3 that
$aa^{\#}(a+b)=a+aa^{\#}b=a(1+a^{\#}b)$ has group inverse. Set $z=a_1+b_1$. Then we have
$$\begin{array}{lll}
(a+aa^{\#}b)^{\#}&=&\left(
\begin{array}{cc}
(a_1+b_1)^{\#}&\\
v&a_4^{\#}
\end{array}
\right)_q\\
&=&\left(
\begin{array}{cc}
z^{\#}&0\\
v&a^{\#}b^{\pi}
\end{array}
\right)_q,
\end{array},$$ where $$\begin{array}{lll}
v&=&(a_4^{\#})^2a_3z^{\pi}+a_4^{\pi}a_3(z^{\#})^2-a_4^{\#}a_3z^{\#}\\
&=&[a^{\#}b^{\pi}]^2a_3z^{\pi}+a^{\pi}a_3(z^{\#})^2-a^{\#}b^{\pi}a_3z^{\#}\\
&=&[a^{\#}b^{\pi}]^2[abb^{\#}-aa^{\#}bb^{\#}a]z^{\pi}-a^{\#}b^{\pi}[abb^{\#}-aa^{\#}bb^{\#}a]z^{\#}.
\end{array}$$ We easily verify that $a_1+b_1\in R^{\#}$, and so $aa^{\#}b+bb^{\#}a\in R^{\#}.$

$\Longleftarrow$ Clearly, $a_4\in R^{\#}$ and $$\begin{array}{lll}
a_1+b_2&=&q(a+aa^{\#}b)q\\
&=&aa^{\#}bb^{\#}(a+aa^{\#}b)\\
&=&bb^{\#}a-a^{\pi}bb^{\#}a+aa^{\#}bb^{\#}aa^{\#}b\\
&=&bb^{\#}a+aa^{\#}b-aa^{\#}bb^{\#}a^{\pi}b\\
&=&bb^{\#}a+aa^{\#}b-aa^{\#}(1-b^{\pi})a^{\pi}b\\
&=&aa^{\#}b+bb^{\#}a\\
&\in&R^{\#}.
\end{array}$$
Moreover, we check that
$$\begin{array}{lll}
a_4^{\pi}a_3(a_1+b_1)^{\pi}&=&a^{\pi}b^{\pi}abb^{\#}(a_1+b_1)^{\pi}\\
&=&0.
\end{array}$$ According to Lemma 2.1, $$\begin{array}{lll}
a(1+a^{\#}b)&=&a+aa^{\#}b\\
&\in&R^{\#}.
\end{array}$$ Moreover,
$$\begin{array}{rll}
[a(1+a^{\#}b)]^{\#}&=&[aa^{\#}b+bb^{\#}a]^{\#}+a^{\#}b^{\pi}\\
&+&[a^{\#}b^{\pi}]^2[abb^{\#}-aa^{\#}bb^{\#}a][aa^{\#}b+bb^{\#}a]^{\pi}\\
&-&a^{\#}b^{\pi}[abb^{\#}-aa^{\#}bb^{\#}a][aa^{\#}b+bb^{\#}a]^{\#}.
\end{array}$$ This completes the proof.\end{proof}

In ~\cite[Theorem 3.1]{Z2}, Zhou et al. investigated the group inverse of $a+b$ under the condition $aa^{\#}b=bb^{\#}a$ in a Dedekind ring with $2$ is invertible. We now extend this result to a general setting.

\begin{cor} Let $a,b,aa^{\#}b\in R^{\#}$ and $aa^{\#}b=bb^{\#}a$. Then $a+b\in R^{\#}$ if and only if $2aa^{\#}b\in R^{\#}$.\end{cor}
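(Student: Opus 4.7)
The plan is to apply Theorem 2.5 to the pair $(a,b)$, reducing its general criterion to the simple one stated here by exploiting the identity $aa^{\#}b=bb^{\#}a$.

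First I would extract the consequences of this identity that match the auxiliary conditions of Theorem 2.5. Left-multiplying by $a$ gives $ab=abb^{\#}a$, hence $aba^{\pi}=abb^{\#}(aa^{\pi})=0$; symmetrically $bab^{\pi}=0$. Left-multiplying the hypothesis by $a^{\pi}$ gives $a^{\pi}bb^{\#}a=0$, so $a^{\pi}b^{\pi}a=a^{\pi}a-a^{\pi}bb^{\#}a=0$; symmetrically $b^{\pi}a^{\pi}b=0$. Thus two of the three conditions appearing in the conclusion of Theorem 2.5 hold automatically, and two of its product hypotheses are verified.

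Next I would verify the group invertibility assumptions $ba^{\pi},ab^{\pi}\in R^{\#}$ that Theorem 2.5 requires but Corollary 2.6 does not list explicitly. From the hypothesis one derives $baa^{\#}b=ba$ and $b^{\#}a=b^{\#}aa^{\#}b$ (by left-multiplying by $b$ and by $b^{\#}$ respectively), and $aa^{\#}b^{\#}=bb^{\#}a(b^{\#})^{2}$ (by right-multiplying twice by $b^{\#}$). With these identities $b^{\#}a^{\pi}$ becomes a natural candidate for $(ba^{\pi})^{\#}$: direct computation gives $(b^{\#}a^{\pi})(ba^{\pi})=bb^{\#}a^{\pi}$ (using $a^{\pi}b=b-bb^{\#}a$ and $b^{\#}aa^{\pi}=0$), and the commutation $(ba^{\pi})(b^{\#}a^{\pi})=bb^{\#}a^{\pi}$ reduces to establishing $baa^{\#}b^{\#}a^{\pi}=0$, which follows after rewriting $baa^{\#}b^{\#}$ as $ba(b^{\#})^{2}$ via the third identity above. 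The symmetric argument, carried out in the opposite ring as in Corollary 2.4, yields $ab^{\pi}\in R^{\#}$ with $(ab^{\pi})^{\#}=a^{\#}b^{\pi}$.

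With all hypotheses of Theorem 2.5 in place, its conclusion reads $a+b\in R^{\#}$ if and only if $aa^{\#}b+bb^{\#}a\in R^{\#}$; and under $aa^{\#}b=bb^{\#}a$ this sum is precisely $2aa^{\#}b$, giving the corollary. The main obstacle is the middle step: deriving the non-obvious identity $baa^{\#}b^{\#}a^{\pi}=0$ (equivalently $bab^{\#}=baa^{\#}$) from the single commuting-like hypothesis $aa^{\#}b=bb^{\#}a$ is the technical heart of the argument and does not have a one-line derivation.
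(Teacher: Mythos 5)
Your overall route is the paper's route: check the hypotheses of Theorem 2.5 and observe that its criterion collapses to $aa^{\#}b+bb^{\#}a=2aa^{\#}b$. Your verification of $aba^{\pi}=0$, $bab^{\pi}=0$, $a^{\pi}b^{\pi}a=0$ and $b^{\pi}a^{\pi}b=0$ is correct and matches the paper's computations. The problem is the remaining hypotheses $ba^{\pi},ab^{\pi}\in R^{\#}$, and here your argument has a genuine gap. Your direct verification that $b^{\#}a^{\pi}$ is the group inverse of $ba^{\pi}$ hinges on $baa^{\#}b^{\#}a^{\pi}=0$ (essentially on $aa^{\#}b^{\#}a^{\pi}=0$, i.e.\ that $b^{\#}$ inherits the triangularity of $b$ with respect to the idempotent $aa^{\#}$). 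The rewriting you offer, $baa^{\#}b^{\#}=ba(b^{\#})^{2}$, does not establish this: there is no visible reason why $ba(b^{\#})^{2}a^{\pi}=0$, the parenthetical ``equivalently $bab^{\#}=baa^{\#}$'' is itself unjustified (it amounts to $baa^{\#}b^{\pi}=0$, a different condition), and your closing sentence concedes the step is not derived. For complex matrices this triangularity is automatic because $b^{\#}$ is a polynomial in $b$, but in a general ring it is exactly the nontrivial content that cannot be obtained by one-line identities from $aa^{\#}b=bb^{\#}a$ alone.

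Telling evidence of the gap: your argument never uses the hypothesis $aa^{\#}b\in R^{\#}$, which is precisely what the paper uses at this point. Since $aa^{\#}ba^{\pi}=bb^{\#}aa^{\pi}=0$, the element $b$ is triangular with respect to $p=aa^{\#}$ with diagonal entries $aa^{\#}b$ and $ba^{\pi}$; knowing that both $b$ and $aa^{\#}b$ lie in $R^{\#}$, the paper invokes the Pierce-matrix argument of Lemma 2.2 (built on Lemma 2.1, with the roles of the two diagonal entries interchanged) to conclude $ba^{\pi}\in R^{\#}$ with $(ba^{\pi})^{\#}=b^{\#}a^{\pi}$ — and this argument also delivers, as a byproduct, the very triangularity of $b^{\#}$ that your computation needs; the case of $ab^{\pi}$ is symmetric. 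So to repair your proof you should route this step through the unused hypothesis $aa^{\#}b\in R^{\#}$ and the Lemma 2.1/2.2 machinery rather than attempt a bare-hands identity; as it stands, the technical heart of the corollary is asserted but not proved.
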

\begin{proof} Since $aa^{\#}b=bb^{\#}a$, we check that $aba^{\pi}=a(aa^{\#}b)a^{\pi}=a(bb^{\#}a)a^{\pi}=0$.
Similarly, we have $bab^{\pi}=b^{\#}(b^2a)b^{\pi}=b^{\#}ba(bb^{\pi})=0$. Moreover, we have
$$\begin{array}{c}
a^{\pi}b^{\pi}a=a^{\pi}(1-bb^{\#})a=a^{\pi}bb^{\#}a=a^{\pi}aa^{\#}b=0,\\
b^{\pi}a^{\pi}b=b^{\pi}(1-aa^{\#})b=b^{\pi}aa^{\#}b=b^{\pi}bb^{\#}a=0.
\end{array}$$ Since $aa^{\#}b\in R^{\#}$ and $aa^{\#}ba^{\pi}=0$, similarly to Lemma 2.2, $ba^{\pi}\in R^{\#}$ and $[ba^{\pi}]^{\#}=b^{\#}a^{\pi}$. Likewise, $ab^{\pi}\in R^{\#}$. Therefore we complete the proof by Theorem 2.5.\end{proof}

Applying Corollary 2.6 to the opposite ring $(R,*)$, we dually derive

\begin{cor} Let $a,b,abb^{\#}\in R^{\#}$ and $abb^{\#}=baa^{\#}$. Then $a+b\in R^{\#}$ if and only if $2abb^{\#}\in R^{\#}$.\end{cor}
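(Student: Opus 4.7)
The plan is to derive this corollary from Corollary 2.6 by passing to the opposite ring $(R,*)$ with multiplication $x*y = y\cdot x$, exactly as in the proof of Corollary 2.4.

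The first step is to check that group invertibility is preserved under this passage and that $a^{\#}$ is the same element in both rings: the identities $x=xax$ and $ax=xa$ defining a group inverse are manifestly symmetric under reversing the product, and under the commutativity $ax=xa$ the remaining condition $a^{n+1}x = a^n$ coincides with its mirror $xa^{n+1}=a^n$. Using in addition that $a^{\#}a = aa^{\#}$, one then computes
$$a*a^{\#}*b \;=\; b\cdot(a^{\#}\cdot a) \;=\; baa^{\#}, \qquad b*b^{\#}*a \;=\; a\cdot(b^{\#}\cdot b) \;=\; abb^{\#}.$$
Hence the hypothesis $abb^{\#}=baa^{\#}$ becomes exactly the condition $a*a^{\#}*b = b*b^{\#}*a$ in $(R,*)$, and the hypothesis $abb^{\#}\in R^{\#}$ translates to $a*a^{\#}*b$ being group invertible in $(R,*)$. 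These are precisely the hypotheses of Corollary 2.6 for the opposite ring.

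Applying Corollary 2.6 in $(R,*)$ then yields that $a+b$ is group invertible in $(R,*)$ if and only if $2(a*a^{\#}*b)$ is group invertible there. By the first step, these two statements are respectively equivalent to $a+b\in R^{\#}$ and $2baa^{\#}\in R^{\#}$ in the original ring, and using $abb^{\#}=baa^{\#}$ the latter equals $2abb^{\#}\in R^{\#}$. There is no genuine obstacle here; the only point requiring a moment of care is the bookkeeping of the direction of products such as $aa^{\#}b$ when rewriting them in $(R,*)$, which becomes transparent once group invertibility has been identified in the two rings.
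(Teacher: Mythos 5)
Your proposal is correct and is exactly the paper's argument: the paper proves this corollary in one line by applying Corollary 2.6 to the opposite ring $(R,*)$, and you have simply supplied the routine verifications (invariance of group inverses under reversing the product and the translation $a*a^{\#}*b=baa^{\#}$, $b*b^{\#}*a=abb^{\#}$) that the paper leaves implicit.
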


\section{communicative-like conditions}

The aim of this section is to investigate the group inverse in a ring under some commutative-like conditions. We now prove:

\begin{lem} Let $a,b\in R^{\#}$. If $a^2b=aba$ and $b^2a=bab$, then $ab\in R^{\#}$ and $$(ab)^{\#}=a^{\#}b^{\#}.$$
\end{lem}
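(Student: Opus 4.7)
The plan is to verify the three defining conditions of the group inverse for $x=ab$ and $y=a^{\#}b^{\#}$: the commutation $xy=yx$, together with $xyx=x$ and $yxy=y$.

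First I would extract a handful of workhorse identities from the hypotheses. The relation $a^{2}b=aba$ says exactly that $a$ commutes with $ab$, and hence (by a standard fact about group inverses) so does $a^{\#}$; combined with left-multiplication of $a^{2}b=aba$ by $a^{\#}$ this yields $aa^{\#}\cdot ba = ab$, $a^{\#}\cdot ab = aa^{\#}b$, $a^{\#}b=ab(a^{\#})^{2}$, and by iteration $(a^{\#})^{2}b = ab(a^{\#})^{3}$. Symmetric identities come from $b^{2}a=bab$: $bb^{\#}\cdot ab = ba$, $b^{\#}\cdot ab = bab^{\#}$, and $b^{\#}a=ba(b^{\#})^{2}$. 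I will also use the standard identities $b(b^{\#})^{2}=b^{\#}$ and $aa^{\#}\cdot a^{\#}=a^{\#}$.

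Writing $e=aa^{\#}$ and $f=bb^{\#}$, a short chain using these identities gives $(ab)(a^{\#}b^{\#})=(a^{\#}ab)b^{\#}=(aa^{\#}b)b^{\#}=ef$, and the analogous computation $(a^{\#}b^{\#})(ab)=a^{\#}(bab^{\#})=(a^{\#}ba)b^{\#}=(aa^{\#}b)b^{\#}=ef$ gives the reverse, establishing commutation. Then $xyx=ef\cdot ab=aa^{\#}(bb^{\#}\cdot ab)=aa^{\#}\cdot ba=ab$.

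The delicate step is $yxy=y$, which reduces to $ef\cdot a^{\#}b^{\#}=a^{\#}b^{\#}$. I would first simplify $ef\cdot a^{\#}$: right-multiplying $bb^{\#}a=bab^{\#}$ by $a^{\#}$ and using $aa^{\#}\cdot a^{\#}=a^{\#}$ yields $bb^{\#}a^{\#}=bab^{\#}(a^{\#})^{2}$, so $ef\cdot a^{\#}=(aa^{\#}ba)b^{\#}(a^{\#})^{2}=abb^{\#}(a^{\#})^{2}$. The problem thus reduces to $abb^{\#}(a^{\#})^{2}b^{\#}=a^{\#}b^{\#}$. I would first prove the auxiliary identity $abb^{\#}(a^{\#})^{2}b=a^{\#}b$ by the chain
\[
abb^{\#}(a^{\#})^{2}b = abb^{\#}\cdot ab(a^{\#})^{3} = a(bb^{\#}\cdot ab)(a^{\#})^{3} = a\cdot ba\cdot(a^{\#})^{3} = a^{2}b(a^{\#})^{3} = a\cdot ab(a^{\#})^{2}\cdot a^{\#} = a\cdot a^{\#}b\cdot a^{\#} = a^{\#}b,
\]
where the last equality uses $(a^{\#})^{2}ab=a^{\#}b$. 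Right-multiplying by $(b^{\#})^{2}$ and invoking $b(b^{\#})^{2}=b^{\#}$ then yields $abb^{\#}(a^{\#})^{2}b^{\#}=a^{\#}b^{\#}$.

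The main obstacle is precisely $yxy=y$: because $a^{\#}$ and $b^{\#}$ need not commute, the product $a^{\#}b^{\#}\cdot ab\cdot a^{\#}b^{\#}$ does not telescope by direct rearrangement, and one is forced through the auxiliary identity above, which moves the obstructive factor $bb^{\#}$ past the powers of $a^{\#}$ by funneling through $bb^{\#}\cdot ab=ba$ and $ab(a^{\#})^{2}=a^{\#}b$.
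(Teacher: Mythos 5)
Your proof is correct, but it takes a genuinely different route from the paper's. The paper obtains the lemma almost immediately from an external result: by the product theorem for Drazin inverses of Zhu and Chen \cite[Theorem 3.1]{Z}, the hypotheses $a^2b=aba$ and $b^2a=bab$ already give $ab\in R^D$ with $(ab)^D=a^Db^D=a^{\#}b^{\#}$, and the only remaining check is the single identity $(ab)^2(ab)^D=ab$, which forces the Drazin index to be $1$ and hence $(ab)^{\#}=a^{\#}b^{\#}$. You instead verify the three defining identities $xy=yx$, $xyx=x$, $yxy=y$ for $x=ab$, $y=a^{\#}b^{\#}$ directly from the hypotheses, using only the standard commuting property that $a^{\#}$ commutes with anything commuting with $a$ (a fact the paper itself invokes elsewhere via \cite{D}); your workhorse identities $aa^{\#}ba=ab$, $bb^{\#}ab=ba$, $b^{\#}ab=bab^{\#}$, $ab(a^{\#})^2=a^{\#}b$ all check out, as does the delicate $yxy=y$ step through the auxiliary identity $abb^{\#}(a^{\#})^2b=a^{\#}b$ (the one step you leave implicit, $a^{\#}ba=aa^{\#}b$, follows at once from $a^{\#}=(a^{\#})^2a$ and $aba=a^2b$, or from $ab(a^{\#})^2=a^{\#}b$). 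What your approach buys is a self-contained, elementary argument that does not lean on the cited Drazin-inverse theorem; what the paper's approach buys is brevity, since citing \cite{Z} reduces the whole lemma to one index computation.
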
\begin{proof} Since $a,b\in R^{\#}$, we have $a,b\in R^D$. In view of~\cite[Theorem 3.1]{Z}, $ab\in R^D$ and $(ab)^D=a^Db^D=a^{\#}b^{\#}$.
One easily checks that$$(ab)^2(ab)^D=ababa^{\#}b^{\#}=(a^2a^{\#})(b^2b^{\#})=ab.$$ Therefore $(ab)^{\#}=a^{\#}b^{\#},$ as desired.\end{proof}

\begin{thm} Let $a,b\in R^{\#}$. If $a^2b=aba, b^2a=bab$, then $a+b\in R^{\#}$ if and only if $aa^{\#}b+bb^{\#}a\in R^{\#}, a^{\pi}b^{\pi}a=0, b^{\pi}a^{\pi}b=0$.\end{thm}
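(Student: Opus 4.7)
The plan is to derive the hypotheses of Theorem 2.5 from those of Theorem 3.2 and then apply it, since the conclusion of Theorem 2.5 is identical to the one sought here. Given $a,b\in R^{\#}$, the remaining task is to produce $aba^{\pi}=0$, $bab^{\pi}=0$, and $ba^{\pi},\,ab^{\pi}\in R^{\#}$.

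The triangularity identities come out immediately: left-multiplying $a^{2}b-aba=0$ by $a^{\#}$ gives $aa^{\#}(ab-ba)=0$, so $ab=aa^{\#}ba$, and then $ab\cdot aa^{\#}=aa^{\#}b(a\cdot aa^{\#})=aa^{\#}ba=ab$, i.e.\ $aba^{\pi}=0$; the symmetric computation starting from $b^{2}a=bab$ gives $bab^{\pi}=0$.

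The main obstacle is establishing the existence $ba^{\pi}\in R^{\#}$. Lemma 2.2 would supply the formula $(ba^{\pi})^{\#}=b^{\#}a^{\pi}$ but assumes $ba^{\pi}\in R^{\#}$, so it cannot be used here to produce existence. Instead I take $b^{\#}a^{\pi}$ as a candidate and verify the three group-inverse identities directly. Two corner facts drive the verification, namely $aa^{\#}b^{\#}a^{\pi}=0$ and $baa^{\#}b^{\pi}=0$. For the first I invoke Lemma 3.1, which supplies $(ab)^{\#}=a^{\#}b^{\#}$: the just-established relation $aa^{\#}\cdot ab=ab=ab\cdot aa^{\#}$ places $ab$ in the corner $aa^{\#}Raa^{\#}$, so its group inverse $a^{\#}b^{\#}$ lies in the same corner, and left-multiplying by $a$ yields the identity. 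For the second I use the companion equality $bb^{\#}aa^{\#}=b^{\#}aa^{\#}b$ obtained by applying Lemma 3.1 to the pair $(b,a)$ (which gives $(ba)^{\#}=b^{\#}a^{\#}$ and then $(ba)(ba)^{\#}=(ba)^{\#}(ba)$); left-multiplying this equality by $b$ and using $bbb^{\#}=b$ produces $baa^{\#}=bb^{\#}aa^{\#}b$, whence $baa^{\#}b^{\pi}=bb^{\#}aa^{\#}\cdot bb^{\pi}=0$.

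Once these corner facts are in hand the verification becomes routine: the relations $a^{\pi}ba^{\pi}=ba^{\pi}$, $a^{\pi}b^{\#}a^{\pi}=b^{\#}a^{\pi}$, and $ba^{\pi}\cdot bb^{\#}=ba^{\pi}$ collapse both products $(ba^{\pi})(b^{\#}a^{\pi})$ and $(b^{\#}a^{\pi})(ba^{\pi})$ to $bb^{\#}a^{\pi}$ and reduce $(ba^{\pi})(b^{\#}a^{\pi})(ba^{\pi})=ba^{\pi}$ and $(b^{\#}a^{\pi})(ba^{\pi})(b^{\#}a^{\pi})=b^{\#}a^{\pi}$ to short manipulations. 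The existence $ab^{\pi}\in R^{\#}$ together with the formula $(ab^{\pi})^{\#}=a^{\#}b^{\pi}$ follows by interchanging the roles of $a$ and $b$, the hypotheses of Theorem 3.2 being symmetric in that swap. With all hypotheses of Theorem 2.5 now in place, Theorem 3.2 follows.
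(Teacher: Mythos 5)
Your overall route coincides with the paper's: derive $aba^{\pi}=0$ and $bab^{\pi}=0$ from the hypotheses, establish that $ba^{\pi}$ and $ab^{\pi}$ are group invertible, and then invoke Theorem 2.5, whose conclusion is verbatim the equivalence to be proved. Where you genuinely differ is the existence step for $ba^{\pi}$: the paper applies Lemma 3.1 to the pair $(aa^{\#},b)$ (after using Drazin's commuting theorem \cite[Theorem 2.2]{D} to get $(aa^{\#})^2b=aa^{\#}baa^{\#}$ and $b^2aa^{\#}=baa^{\#}b$) and then argues ``analogously to Lemma 2.2,'' whereas you verify directly that $b^{\#}a^{\pi}$ satisfies the three group-inverse identities for $ba^{\pi}$. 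That direct verification is sound and more self-contained: with $aa^{\#}ba^{\pi}=0$ (left-multiply $aba^{\pi}=0$ by $a^{\#}$) and your first corner fact $aa^{\#}b^{\#}a^{\pi}=0$ (correctly obtained from $ab=aa^{\#}(ab)aa^{\#}$, $(ab)^{\#}=a^{\#}b^{\#}$ and the fact that the group inverse of a corner element stays in the corner), both products $(ba^{\pi})(b^{\#}a^{\pi})$ and $(b^{\#}a^{\pi})(ba^{\pi})$ collapse to $bb^{\#}a^{\pi}$, and if you group the triple products as $\bigl((ba^{\pi})(b^{\#}a^{\pi})\bigr)(ba^{\pi})$ and $\bigl((b^{\#}a^{\pi})(ba^{\pi})\bigr)(b^{\#}a^{\pi})$ you never need the second corner fact at all; the case of $ab^{\pi}$ follows by the symmetry of the hypotheses, as you say.

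The one genuine gap is your derivation of that second fact. You assert that $(ba)(ba)^{\#}=(ba)^{\#}(ba)$ ``is'' the equality $bb^{\#}aa^{\#}=b^{\#}aa^{\#}b$; what it actually says is $bab^{\#}a^{\#}=b^{\#}a^{\#}ba$. The right-hand side does become $b^{\#}aa^{\#}b$, but only via the identity $a^{\#}ba=aa^{\#}b$ (left-multiply $a^2b=aba$ by $(a^{\#})^2$), which you never state; and identifying the left-hand side with $bb^{\#}aa^{\#}$ amounts to $bab^{\#}=bb^{\#}a$, which is exactly the commuting property of the group inverse ($b$ commutes with $ba$, hence so does $b^{\#}$) that the paper imports from Drazin's theorem -- your cited commutation identity alone does not supply it. The conclusion you want is still reachable from what you do have: from $bab^{\#}a^{\#}=b^{\#}a^{\#}ba=b^{\#}aa^{\#}b$, left-multiply by $b$ and simplify the left side using $b^2a=bab$ and $bab^{\pi}=0$ to get $baa^{\#}=bb^{\#}aa^{\#}b$, whence $baa^{\#}b^{\pi}=bb^{\#}aa^{\#}bb^{\pi}=0$. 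So the argument is repairable with one extra line (or the step can simply be deleted, as noted above), but as written the provenance you give for $bb^{\#}aa^{\#}=b^{\#}aa^{\#}b$ is a leap rather than a consequence of the stated identity.
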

\begin{proof} Since $a^2b=aba$, we have $aba^{\pi}=a^{\#}(a^2b)a^{\pi}=a^{\#}(aba)a^{\pi}=0$. It follows from $b^2a=bab$ that $bab^{\pi}=0$.
Since $a(ab)=a^2b=aba=(ab)a$, we have $a^{\#}(ab)=a^D(ab)=(ab)a^D=(ab)a^{\#}$ by~\cite[Theorem 2.2]{D}; hence,
$(aa^{\#})^2b=a^{\#}(ab)=(ab)a^{\#}=aa^{\#}baa^{\#}$. Also we have $b^2aa^{\#}=(bab)a^{\#}=baa^{\#}b$.
In view of Lemma 3.1, $(aa^{\#}b)^{\#}=aa^{\#}b^{\#}.$ Since $aa^{\#}ba^{\pi}=0$, analogously to Lemma 2.2, $ba^{\pi}\in R^{\#}$ and $(ba^{\pi})^{\#}=b^{\#}a^{\pi}$.
Similarly, we show that $ab^{\pi}\in R^{\#}$. Therefore we complete the proof by Theorem 2.5.\end{proof}

\begin{cor} Let $a,b\in R^{\#}$. If $ab=ba$, then $a+b\in R^{\#}$ if and only if $aa^{\#}b+bb^{\#}a\in R^{\#}$.\end{cor}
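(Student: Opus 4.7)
The plan is to deduce Corollary 3.3 directly from Theorem 3.2 by checking that the single hypothesis $ab=ba$ already forces each of the four side conditions $a^{2}b=aba$, $b^{2}a=bab$, $a^{\pi}b^{\pi}a=0$, and $b^{\pi}a^{\pi}b=0$ appearing in that theorem. Once these are in place, Theorem 3.2 immediately yields the stated biconditional, with the group invertibility of $aa^{\#}b+bb^{\#}a$ playing exactly the role required.

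The first two identities are essentially free: $ab=ba$ at once gives $a^{2}b=a(ab)=a(ba)=aba$, and symmetrically $b^{2}a=bab$. So the commutative-like hypotheses of Theorem 3.2 drop out on inspection and require no further discussion.

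The substantive step is to verify the two spectral-idempotent identities. For this I would invoke the standard fact that when $a\in R^{\#}$ and $ab=ba$, the group inverse $a^{\#}$ also commutes with $b$; this follows either from the observation that $a^{\#}$ lies in the (commutative) subring generated by $a$, or from a short manipulation of the defining equations $a^{\#}aa^{\#}=a^{\#}$, $aa^{\#}=a^{\#}a$, $a^{2}a^{\#}=a$. Granted this, $a^{\pi}=1-aa^{\#}$ commutes with $b$ and, symmetrically, $b^{\pi}$ commutes with $a$. Then
\[
a^{\pi}b^{\pi}a=a^{\pi}ab^{\pi}=(a-aa^{\#}a)b^{\pi}=0,
\]
and the completely symmetric computation gives $b^{\pi}a^{\pi}b=0$.

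With all four hypotheses of Theorem 3.2 now verified, applying that theorem completes the argument. I expect the only mildly nontrivial point to be the commutation of $a^{\#}$ with $b$, but this is a well-known one-line fact about group inverses, so no genuine obstacle remains.
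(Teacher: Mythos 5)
Your proof is correct and follows the paper's own route: reduce to Theorem 3.2 after observing $a^{2}b=aba$ and $b^{2}a=bab$. What you add is the point the paper's one-line proof leaves implicit, namely that under $ab=ba$ the remaining conditions $a^{\pi}b^{\pi}a=0$ and $b^{\pi}a^{\pi}b=0$ of Theorem 3.2 are automatic, so the equivalence collapses to $aa^{\#}b+bb^{\#}a\in R^{\#}$; this verification is genuinely needed for the ``if'' direction, so your write-up is the more complete one. One small correction to your justification of the key commutation $a^{\#}b=ba^{\#}$: in an arbitrary ring $a^{\#}$ need not lie in the subring generated by $a$ (that argument is special to matrices and Banach algebras, where the Drazin inverse is a polynomial in $a$); the correct general statement is Drazin's theorem that the Drazin, hence group, inverse lies in the double commutant of $a$ --- precisely the paper's reference [D, Theorem 2.2], already invoked in the proof of Theorem 3.5 --- or else one carries out the short direct manipulation you allude to (it is a few lines, not one). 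With that fact in place, $a^{\pi}$ commutes with $b$ and $b^{\pi}$ with $a$, and your computation $a^{\pi}b^{\pi}a=a^{\pi}ab^{\pi}=0$, together with its symmetric counterpart, is exactly right.
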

\begin{proof} Since $ab=ba$, we have $a^2b=aba$ and $b^2a=ba$. The result follows by Theorem 3.2.\end{proof}

We now illustrates Theorem 3.2 by the following example:

\begin{exam} Let $a=
\left(
\begin{array}{cccc}
1&0&0&0\\
0&1&0&0\\
0&0&0&1\\
0&0&0&0
\end{array}
\right), b=\left(
\begin{array}{cccc}
\frac{1}{3}&0&0&0\\
0&0&0&0\\
0&0&0&0\\
0&0&0&\frac{1}{3}
\end{array}
\right)\in {\Bbb C}^{4\times 4}$. Then $a^2b=aba, b^2a=bab$. But $ab\neq ba$. We compute that
$$a^{\#}=\left(
\begin{array}{cccc}
1&0&0&0\\
0&1&0&0\\
0&0&0&0\\
0&0&0&0
\end{array}
\right), b^{\#}=\left(
\begin{array}{cccc}
3&0&0&0\\
0&0&0&0\\
0&0&0&0\\
0&0&0&3
\end{array}
\right).$$ In this case,
$$(aa^{\#}b+bb^{\#}a)^{\#}=\left(
\begin{array}{cccc}
\frac{3}{4}&0&0&0\\
0&0&0&0\\
0&0&0&0\\
0&0&0&0
\end{array}
\right),(a+b)^{\#}=\left(
\begin{array}{cccc}
\frac{3}{4}&0&0&0\\
0&1&0&0\\
0&0&0&0\\
0&0&0&3
\end{array}
\right).$$\end{exam}

We are now ready to prove:

\begin{thm} Let $a,b\in R^{\#}$. If $a^2b=aba, b^2a=bab$, then $a+b\in R^{\#}$ if and only if $1+a^{\#}b\in R^{\#}$ and $b^{\pi}a^{\pi}b=0$.\end{thm}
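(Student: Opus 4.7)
The plan is to apply Theorem 2.3 and then reduce ``$a(1+a^{\#}b)\in R^{\#}$'' (which appears in the conclusion of Theorem 2.3) to ``$1+a^{\#}b\in R^{\#}$'' by a Peirce decomposition argument. Writing $p:=aa^{\#}$, multiplying $a^2b=aba$ successively on the left by $a^{\#}$ gives $ab=aa^{\#}ba$ (hence $aba^{\pi}=0$) and then the key identity $aa^{\#}b=a^{\#}ba$. Right multiplication of the latter by $a^{\pi}$ yields $pb=pbp$, equivalently $bp^{\pi}=p^{\pi}bp^{\pi}$. Symmetrically, $b^2a=bab$ produces $bab^{\pi}=0$ and (as derived inside the proof of Theorem 3.2 via $ab\cdot a^{\#}=pb$) $b^2p=bpb$.

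Next I would verify $ba^{\pi}\in R^{\#}$ in order to invoke Theorem 2.3. From $bp^{\pi}=p^{\pi}bp^{\pi}$ one computes $(bp^{\pi})^2=b^2p^{\pi}$, and from $b^2p=bpb$ one gets $(bp^{\pi})b=b^2-bpb=b^2-b^2p=b^2p^{\pi}=b(bp^{\pi})$. Hence $b$ commutes with $bp^{\pi}$; by the standard commutation property of the group inverse (\cite[Theorem 2.2]{D}) so does $b^{\#}$, and a direct check using this commutation then confirms $(bp^{\pi})^{\#}=b^{\#}p^{\pi}$.

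With the hypotheses of Theorem 2.3 in hand, $a+b\in R^{\#}\iff a(1+a^{\#}b)\in R^{\#}$ and $b^{\pi}a^{\pi}b=0$, so it remains to prove $a(1+a^{\#}b)\in R^{\#}\iff 1+a^{\#}b\in R^{\#}$. Set $c=1+a^{\#}b$. Since $pb=pbp$ and $a^{\#}=pa^{\#}=a^{\#}p$, a short calculation gives $a^{\#}b=pa^{\#}bp\in pRp$, whence
$$
c=\left(\begin{array}{cc} p+a^{\#}b & 0\\ 0 & p^{\pi}\end{array}\right)_p,\qquad ac=\left(\begin{array}{cc} a+aa^{\#}b & 0\\ 0 & 0\end{array}\right)_p.
$$
Both are block diagonal with trivially group-invertible $(2,2)$-corners, so the question reduces to whether $a+aa^{\#}b$ is group invertible in $pRp$ iff $p+a^{\#}b$ is. In the corner ring $pRp$ (with identity $p$) the element $a$ is a unit with inverse $a^{\#}$, and $a$ commutes with $p+a^{\#}b$ (this is precisely the identity $aa^{\#}b=a^{\#}ba$ established above). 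Since $a(p+a^{\#}b)=a+aa^{\#}b$, the desired equivalence follows from the general principle that if a unit $u$ in a ring $S$ commutes with $y\in S$, then $uy\in S^{\#}\iff y\in S^{\#}$ (one direction uses the product rule for commuting group-invertible elements; the other writes $y=u^{-1}(uy)$ and uses that $u^{-1}$ commutes with $uy$). The main obstacle is the verification $ba^{\pi}\in R^{\#}$, where both hypotheses $a^2b=aba$ and $b^2a=bab$ must be combined (the first supplies $pb=pbp$, the second $b^2p=bpb$); once that is in place, the remainder of the proof is Peirce-decomposition bookkeeping plus the unit-commute principle.
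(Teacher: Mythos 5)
Your argument is correct, and after the common first move it takes a genuinely different route from the paper. Both proofs begin by feeding the hypotheses into Theorem 2.3 (after checking $aba^{\pi}=0$ and $ba^{\pi}\in R^{\#}$), so everything hinges on relating $a(1+a^{\#}b)\in R^{\#}$ to $1+a^{\#}b\in R^{\#}$. The paper does this asymmetrically: for necessity it writes $1+a^{\#}b=a^{\pi}+a^{\#}(a+b)$, invokes Lemma 3.1 (via \cite{Z} and \cite{D}) to get $[a^{\#}(a+b)]^{\#}=a(a+b)^{\#}$, and then applies Corollary 2.6 to the ``orthogonal'' sum $a^{\pi}+a^{\#}(a+b)$; for sufficiency it uses Lemma 3.1 again to produce $[a(1+a^{\#}b)]^{\#}=a^{\#}(1+a^{\#}b)^{\#}$. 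You instead observe that $a^{\#}b\in pRp$ for $p=aa^{\#}$ (from $pb=pbp$), so both $1+a^{\#}b$ and $a(1+a^{\#}b)$ are block diagonal, and the equivalence collapses to the corner ring $pRp$, where $a$ is a unit commuting with $p+a^{\#}b$ precisely because $aa^{\#}b=a^{\#}ba$; the unit-commute principle then settles both directions at once, with no appeal to Lemma 3.1, Corollary 2.6, or to $(a+b)^{\#}$ itself. Your treatment of the one delicate hypothesis, $ba^{\pi}\in R^{\#}$, is also more explicit than the paper's: where the paper argues ``analogously to Lemma 2.2'' after applying Lemma 3.1 to the pair $(aa^{\#},b)$, you verify directly that $ba^{\pi}$ commutes with $b$ (using $pb=pbp$ and $b^2p=bpb$), hence with $b^{\#}$ by \cite[Theorem 2.2]{D}, and then check the three group-inverse identities for $b^{\#}a^{\pi}$; I confirmed this check goes through. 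The only step you leave as ``bookkeeping'' is that a block-diagonal element is group invertible exactly when its two diagonal corners are group invertible in their corner rings; the nontrivial half of this again uses the double-commutant property (the group inverse of an element commuting with $p$ commutes with $p$), so it deserves a line in a written-up version, but it is standard and does not affect correctness. Net effect: the paper's proof reuses its own machinery (Lemma 3.1, Corollary 2.6) and yields explicit inverse formulas along the way, while yours is more elementary and symmetric, at the cost of not exhibiting $(1+a^{\#}b)^{\#}$ or $(a+b)^{\#}$ explicitly.
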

\begin{proof} $\Longrightarrow$ Since $a^2b=aba$, we see that $aba^{\pi}=a^{\#}(a^2b)a^{\pi}=a^{\#}abaa^{\pi}=0$. As in the proof of Theorem 3.2, we show that $ba^{\pi}$ has group inverse. By virtue of Theorem 2.3, $b^{\pi}a^{\pi}b=0$. Write $1+a^{\#}b=a^{\pi}+a^{\#}(a+b)$. One easily checks that
$$\begin{array}{rll}
(a^{\#})^2(a+b)&=&a^{\#}+(a^{\#})^2b\\
&=&a^{\#}+(a^{\#})^3(ab)\\
&=&a(a^{\#})^2+(a^{\#})^2(ab)a^{\#}\\
&=&a^{\#}aa^{\#}+a^{\#}ba^{\#}\\
&=&a^{\#}(a+b)a^{\#}.
\end{array}$$ Likewise, we have $(a+b)^2a^{\#}=(a+b)a^{\#}(a+b).$ In light of Lemma 3.1, $[a^{\#}(a+b)]^{\#}=a(a+b)^{\#}$.
Clearly, $a^{\pi}[a^{\#}(a+b)]=0$. Also we have $[a^{\#}(a+b)]a^{\pi}= a^{\#}ba^{\pi}=(a^{\#})^3(a^2b)a^{\pi}=(a^{\#})^3(aba)a^{\pi}=0$.
According to Corollary 2.6, $a^{\pi}+a^{\#}(a+b)\in R^{\#}$. Then $1+a^{\#}b\in R^{\#}$, as desired.

$\Longleftarrow$ Clearly, $aa^{\#}(a+b)=a(1+a^{\#}b)$. By hypothesis, we have $$a(1+a^{\#}b)=a+aa^{\#}b=a+aa^{\#}b=a+(a^{\#})^2aba=(1+a^{\#}b)a.$$ By virtue of Lemma 3.1, $[aa^{\#}(a+b)]^{\#}=a^{\#}(1+a^{\#}b)^{\#}$. As in the proof of Theorem 3.2, $ba^{\pi}\in R^{\#}$. Since $b^{\pi}a^{\pi}b=0$, we complete the proof by Theorem 2.3.\end{proof}

\begin{cor} Let $a,b\in R^{\#}$. If $ab^2=bab, ba^2=aba$, then $a+b\in R^{\#}$ if and only if $1+ba^{\#}\in R^{\#}$ and $ba^{\pi}b^{\pi}=0$.\end{cor}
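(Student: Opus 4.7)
My plan is to obtain Corollary 3.6 from Theorem 3.5 by the opposite-ring trick, exactly as Corollary 2.4 was deduced from Theorem 2.3. Let $(R,*)$ denote the opposite ring with multiplication $x*y = y\cdot x$. The key observation is that the hypotheses and conclusion of Corollary 3.6 are precisely the translations into $(R,\cdot)$ of the hypotheses and conclusion of Theorem 3.5 applied in $(R,*)$, so no new verification beyond a dictionary of correspondences is needed.

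First I would record that group invertibility is preserved under passing to the opposite ring, and that the group inverse and the spectral idempotent of an element are unchanged: the identities $axa=a$, $xax=x$, $ax=xa$ are symmetric in $*$, so $a^{\#}$ in $(R,*)$ equals $a^{\#}$ in $(R,\cdot)$, and therefore $a^{\pi}=1-aa^{\#}$ is the same element in both rings. Next I would translate each term. The products $a*a*b$ and $a*b*a$ in $(R,*)$ equal $ba^2$ and $aba$ in $(R,\cdot)$, so the Theorem 3.5 hypothesis $a^2b=aba$ becomes $ba^2=aba$ here; similarly $b^2a=bab$ in $(R,*)$ becomes $ab^2=bab$ in $(R,\cdot)$. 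The element $1+a^{\#}*b$ in $(R,*)$ is $1+ba^{\#}$ in $(R,\cdot)$, and $b^{\pi}*a^{\pi}*b$ in $(R,*)$ is $ba^{\pi}b^{\pi}$ in $(R,\cdot)$. Finally, $a+b$ is the same element of both rings and is group invertible in $(R,*)$ iff group invertible in $(R,\cdot)$.

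With the dictionary established, the hypotheses of Corollary 3.6 are exactly the hypotheses of Theorem 3.5 read in $(R,*)$, and the two-sided equivalence stated in Corollary 3.6 is exactly that of Theorem 3.5 read in $(R,*)$. Therefore applying Theorem 3.5 in the opposite ring yields the claim immediately.

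Since the argument is entirely by symmetry, there is no real technical obstacle; the only point requiring care is the bookkeeping of the translation, i.e., making sure the order of factors in each hypothesis is reversed consistently (in particular that $a^2b=aba$ goes to $ba^2=aba$, not to $ab^2=bab$). Given this, the proof is a single sentence invoking Theorem 3.5 applied to $(R,*)$.
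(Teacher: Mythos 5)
Your proposal is correct and is exactly the paper's argument: the paper likewise deduces Corollary 3.6 by applying Theorem 3.5 to the opposite ring $(R,*)$ with $a*b=b\cdot a$. Your explicit dictionary (group inverses and spectral idempotents unchanged, $a^2b=aba$ translating to $ba^2=aba$, $1+a^{\#}b$ to $1+ba^{\#}$, and $b^{\pi}a^{\pi}b$ to $ba^{\pi}b^{\pi}$) just spells out the translation the paper leaves implicit.
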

\begin{proof} Since $(R,\cdot)$ is a ring, $(R,*)$ is a ring with the multiplication $a*b=b\cdot a$.
Then we complete the proof by applying Theorem 3.5 to the opposite ring $(R,*)$.\end{proof}

\section{applications}

We will investigate the group invertibility of a block operator matrix $M$ as in $(*)$. The main idea is to split $M$ as the sum of two special block operator matrices. Then we apply our additive results and find new conditions on bounded linear operators $A,B,C$ and $D$ under which $M$ is group invertible.

\begin{thm} Let $A,D$ and $BC$ have group inverses. If $B(CB)^{\pi}=0, C(BC)^{\pi}=0, ABD^{\pi}=0$ and $DCA^{\pi}=0$, then $M$ has group inverse if and only if
\begin{enumerate}
\item [(1)] $A^{\pi}(BC)^{\pi}A=0, D^{\pi}(CB)^{\pi}D=0$;
\item [(2)] $\left(
  \begin{array}{cc}
   A&AA^{\#}B \\
   DD^{\#}C &D
  \end{array}
\right)$ has group inverse.\end{enumerate}\end{thm}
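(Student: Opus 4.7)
The plan is to split $M = a + b$ with
$$a = \begin{pmatrix} A & 0 \\ 0 & D \end{pmatrix}, \quad b = \begin{pmatrix} 0 & B \\ C & 0 \end{pmatrix}$$
in the ring $R = \mathcal{L}(X \oplus Y)$, and then to invoke Theorem~2.3 with this choice. Clearly $a \in R^{\#}$ with $a^{\#} = \operatorname{diag}(A^{\#}, D^{\#})$. Since $BC \in R^{\#}$, Cline's formula yields $CB \in R^{\#}$ with $(CB)^{\#} = C((BC)^{\#})^{2}B$, and the hypotheses $B(CB)^{\pi} = 0$ and $C(BC)^{\pi} = 0$ let one verify directly that
$$b^{\#} = \begin{pmatrix} 0 & B(CB)^{\#} \\ C(BC)^{\#} & 0 \end{pmatrix}, \qquad b^{\pi} = \begin{pmatrix} (BC)^{\pi} & 0 \\ 0 & (CB)^{\pi} \end{pmatrix},$$
and, in addition, that $(BC)^{\pi}B = 0$ and $(CB)^{\pi}C = 0$. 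From $ABD^{\pi} = 0$ one deduces $AA^{\#}BD^{\pi} = 0$, equivalently $BD^{\pi} = A^{\pi}BD^{\pi}$, and dually $CA^{\pi} = D^{\pi}CA^{\pi}$; this identifies $ba^{\pi}$ as an antidiagonal matrix of the same type as $b$, and the same Cline-style calculation shows $ba^{\pi} \in R^{\#}$. Finally, $aba^{\pi}$ is the antidiagonal matrix with entries $ABD^{\pi}$ and $DCA^{\pi}$, which vanishes by the standing hypotheses.

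With the hypotheses of Theorem~2.3 in place, that theorem says $M \in R^{\#}$ if and only if $a(1 + a^{\#}b) \in R^{\#}$ and $b^{\pi}a^{\pi}b = 0$. The first iff-condition is
$$a + aa^{\#}b = \begin{pmatrix} A & AA^{\#}B \\ DD^{\#}C & D \end{pmatrix},$$
which is precisely condition~(2). A direct block multiplication using the formulas above yields
$$b^{\pi}a^{\pi}b = \begin{pmatrix} 0 & (BC)^{\pi}A^{\pi}B \\ (CB)^{\pi}D^{\pi}C & 0 \end{pmatrix},$$
so the second iff-condition amounts to the two scalar identities $(BC)^{\pi}A^{\pi}B = 0$ and $(CB)^{\pi}D^{\pi}C = 0$.

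The main technical obstacle will be matching these two scalar identities with condition~(1) of the theorem, namely $A^{\pi}(BC)^{\pi}A = 0$ and $D^{\pi}(CB)^{\pi}D = 0$. Using $(BC)^{\pi}B = 0$, the first entry above collapses to $-(BC)^{\pi}AA^{\#}B$, while since $A^{\pi}A = 0$, condition~(1) collapses to $A^{\pi}BC(BC)^{\#}A = 0$; the plan is to show that these two formulations are equivalent modulo the standing hypotheses, exploiting $ABD^{\pi} = 0$ together with the Cline-derived identities $(BC)^{\pi}B = 0$ and $B = B(CB)(CB)^{\#}$ to bridge the ``left'' and ``right'' forms of the identity. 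The $(CB,C,D)$-side equivalence then follows by the dual argument performed in the opposite ring, in the spirit of Corollary~2.4. Once this reconciliation is established, combining it with the second paragraph above proves the theorem.
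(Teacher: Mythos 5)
Your setup coincides with the paper's own proof: the same splitting $M=P+Q$ with $P=\left(\begin{smallmatrix}A&0\\0&D\end{smallmatrix}\right)$, $Q=\left(\begin{smallmatrix}0&B\\C&0\end{smallmatrix}\right)$, the same Cline-type argument that $Q\in R^{\#}$, and the same appeal to Theorem 2.3 with $a=P$, $b=Q$; up to that point your computations (including $b^{\pi}a^{\pi}b=\left(\begin{smallmatrix}0&(BC)^{\pi}A^{\pi}B\\(CB)^{\pi}D^{\pi}C&0\end{smallmatrix}\right)$) are correct. But two steps are missing. First, the claim that $ba^{\pi}=QP^{\pi}=\left(\begin{smallmatrix}0&BD^{\pi}\\CA^{\pi}&0\end{smallmatrix}\right)\in R^{\#}$ ``by the same Cline-style calculation'' is not justified: that calculation used $B(CB)^{\pi}=0$ and $C(BC)^{\pi}=0$, and the analogous requirements for the pair $(BD^{\pi},CA^{\pi})$ (group invertibility of $BD^{\pi}CA^{\pi}$ and the vanishing of the corresponding spectral-idempotent products) are not among the hypotheses; this hypothesis of Theorem 2.3 must be argued (the paper silently skips it as well).

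Second, and decisively, the ``reconciliation'' you defer --- that $(BC)^{\pi}A^{\pi}B=0$, $(CB)^{\pi}D^{\pi}C=0$ is equivalent, under the standing hypotheses, to condition (1) --- is not just unproved; it is false. Take $X=Y={\Bbb C}^2$, $A=\left(\begin{smallmatrix}0&1\\0&1\end{smallmatrix}\right)$, $B=C=\left(\begin{smallmatrix}1&0\\0&0\end{smallmatrix}\right)$, $D=\left(\begin{smallmatrix}0&0\\0&1\end{smallmatrix}\right)$. All standing hypotheses hold, $(BC)^{\pi}A^{\pi}B=0$ and $(CB)^{\pi}D^{\pi}C=0$, condition (2) holds (the matrix there reduces to $\mathrm{diag}(A,D)$), and $M$ is invertible (its determinant is $-1$), hence group invertible; yet $A^{\pi}(BC)^{\pi}A=\left(\begin{smallmatrix}0&-1\\0&0\end{smallmatrix}\right)\neq 0$, so condition (1) fails. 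Thus your plan cannot be completed, and the example shows the printed statement itself is defective: the paper's proof computes $P^{\pi}Q^{\pi}P$ (which is what condition (1) encodes), whereas Theorem 2.3 applied with $a=P$, $b=Q$ --- the assignment forced by its own computations $PQP^{\pi}=0$ and $PP^{\#}(P+Q)$ --- calls for $Q^{\pi}P^{\pi}Q=0$, i.e.\ exactly your pair of identities. Your derivation reflects what the method actually yields; the sustainable conclusion of this approach is the theorem with (1) replaced by $(BC)^{\pi}A^{\pi}B=0$ and $(CB)^{\pi}D^{\pi}C=0$ (and with $QP^{\pi}\in R^{\#}$ either added as a hypothesis or derived), not the equivalence with condition (1) as stated.
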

\begin{proof} Write $M=P+Q$, where $$P=\left(
  \begin{array}{cc}
    A & 0 \\
    0 & D
  \end{array}
\right), Q=\left(
  \begin{array}{cc}
   0 & B \\
   C & 0
  \end{array}
\right).$$ Then $$P^{\#}=\left(
  \begin{array}{cc}
    A^{\#} & 0 \\
    0 & D^{\#}
  \end{array}
\right), P^{\pi}=\left(
  \begin{array}{cc}
    A^{\pi} & 0 \\
    0 & D^{\pi}
  \end{array}
\right).$$ Since $BC$ has group inverse, we have $$(Q^2)^D=\left(
  \begin{array}{cc}
   BC & 0 \\
   0 & CB
  \end{array}
\right)^D=\left(
  \begin{array}{cc}
   (BC)^{\#} & 0 \\
   0 & (CB)^{\#}
  \end{array}
\right).$$ Since $\mathcal{L}(X\oplus Y)$ is a Banach algebra, it follows by ~\cite[Theorem 2.1]{JW} that $Q$ has Drazin inverse and $$Q^D=Q(Q^2)^D=\left(
  \begin{array}{cc}
  0& B(CB)^{\#} \\
  C(BC)^{\#}&0
  \end{array}
\right).$$ Moreover, we have $$Q^{\pi}=\left(
  \begin{array}{cc}
  (BC)^{\pi}&0\\
  0&(CB)^{\pi}
  \end{array}
\right).$$
Hence, $$QQ^D=Q^DQ, Q^D=Q^DQQ^D.$$ It is easy to verify that
$$\begin{array}{lll}
QQ^{\pi}&=&\left(
  \begin{array}{cc}
   0 & B \\
   C & 0
  \end{array}
\right)\left(
  \begin{array}{cc}
   (BC)^{\pi} & 0 \\
  0 & (CB)^{\pi}
  \end{array}
\right)\\
&=&\left(
  \begin{array}{cc}
  0&B(CB)^{\pi}\\
  C(BC)^{\pi} & 0
  \end{array}
\right)\\
&=&0;
\end{array}$$ whence, $Q=Q^2Q^D$. Then $Q$ has group inverse and $$Q^{\#}=\left(
  \begin{array}{cc}
  0& B(CB)^{\#} \\
  C(BC)^{\#}&0
  \end{array}
\right).$$
Therefore we compute that
$$\begin{array}{lll}
P^{\pi}Q^{\pi}P&=&\left(
  \begin{array}{cc}
  A^{\pi}(BC)^{\pi}&0\\
  0&D^{\pi}(CB)^{\pi}
  \end{array}
\right)\left(
  \begin{array}{cc}
    A & 0 \\
    0 & D
  \end{array}
\right)\\
&=&\left(
  \begin{array}{cc}
  A^{\pi}(BC)^{\pi}A&0\\
  0&D^{\pi}(CB)^{\pi}D
  \end{array}
\right)\\
&=&0.
\end{array}$$
We easily check that
$$\begin{array}{rll}
PQP^{\pi}&=&\left(
  \begin{array}{cc}
   0&ABD^{\pi}\\
   DCA^{\pi}&0
  \end{array}
\right),\\
PP^{\#}(P+Q)&=&\left(
  \begin{array}{cc}
   A&AA^{\#}B \\
   DD^{\#}C &D
  \end{array}
\right).
\end{array}$$ Therefore we complete the proof by Theorem 2.3.\end{proof}

\begin{cor} Let $A,D$ and $BC$ have group inverses. If $AB=0,B(CB)^{\pi}=0, C(BC)^{\pi}=0$ and $DCA^{\pi}=0$, then $M$ has group inverse if and only if $BCA=0$ and $D^{\pi}(CB)^{\pi}D=0$.\end{cor}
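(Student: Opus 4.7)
The plan is to apply Theorem 4.1 and watch its two conditions collapse under the stronger hypothesis $AB=0$. Since $AB=0$ trivially forces $ABD^{\pi}=0$, all hypotheses of Theorem 4.1 are in place and $M$ has group inverse iff conditions $(1)$ and $(2)$ of that theorem hold. It remains to rewrite these under $AB=0$.

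For condition $(2)$: using $AA^{\#}=A^{\#}A$, I have $AA^{\#}B=A^{\#}(AB)=0$, so the block matrix in $(2)$ collapses to the block-lower-triangular operator $\left(\begin{array}{cc} A & 0 \\ DD^{\#}C & D \end{array}\right)$. Since $A,D\in R^{\#}$, Lemma~2.1 (applied in $\mathcal{L}(X\oplus Y)$ with the idempotent being the projection onto the first summand) says this matrix is group invertible iff $D^{\pi}(DD^{\#}C)A^{\pi}=0$; but $D^{\pi}DD^{\#}=(1-DD^{\#})DD^{\#}=0$, so condition $(2)$ is automatic.

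For condition $(1)$, the half $D^{\pi}(CB)^{\pi}D=0$ already appears in the conclusion. For the other half $A^{\pi}(BC)^{\pi}A=0$, the key observation is that $AA^{\#}B=0$ yields $AA^{\#}BC=0$, whence $A^{\pi}BC=BC$. Combined with $A^{\pi}A=0$ this gives
\[
A^{\pi}(BC)^{\pi}A = -A^{\pi}BC(BC)^{\#}A = -BC(BC)^{\#}A = -(BC)^{\#}(BC)A,
\]
where the last equality uses that $(BC)^{\#}$ commutes with $BC$. Hence $A^{\pi}(BC)^{\pi}A=0$ iff $(BC)^{\#}(BC)A=0$, and multiplying on the left by $BC$ and using $BC(BC)^{\#}BC=BC$ shows this is equivalent to $BCA=0$. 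Assembling the three reductions yields precisely the stated equivalence. The only step needing any thought is noticing that $AA^{\#}BC=0$ so that $A^{\pi}$ becomes invisible in front of $BC$; everything else is a short calculation.
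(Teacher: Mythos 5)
Your proposal is correct and follows essentially the same route as the paper: invoke Theorem 4.1 (noting $AB=0$ gives $ABD^{\pi}=0$), observe that $AA^{\#}B=A^{\#}(AB)=0$ collapses the matrix in condition (2) to a lower triangular one which is group invertible by Lemma 2.1 since $D^{\pi}DD^{\#}C A^{\pi}=0$, and reduce condition (1) to the stated conditions. Your explicit verification that $A^{\pi}(BC)^{\pi}A=0$ is equivalent to $BCA=0$ (via $A^{\pi}BC=BC$ and $BC(BC)^{\#}BC=BC$) is a detail the paper leaves implicit, and it is carried out correctly.
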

\begin{proof} Clearly, $D^{\pi}(DD^{\#}C)A^{\pi}=0$. As in the proof of~\cite[Theorem 3.2]{B},
$\left(
  \begin{array}{cc}
   A&AA^{\#}B \\
   DD^{\#}C &D
  \end{array}
\right)$ has group inverse. This completes the proof by Theorem 4.1.\end{proof}

\begin{cor} Let $A,D$ and $BC$ have group inverses. If $DC=0, B(CB)^{\pi}=0, C(BC)^{\pi}=0$ and $ABD^{\pi}=0$, then $M$ has group inverse if and only if $A^{\pi}(BC)^{\pi}A=0$ and $CBD=0$.\end{cor}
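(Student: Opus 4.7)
The plan is to apply Theorem 4.1 in a fashion symmetric to the proof of Corollary 4.2, this time exploiting $DC=0$ instead of $AB=0$. First I would verify the four hypotheses of Theorem 4.1: $B(CB)^{\pi}=0$, $C(BC)^{\pi}=0$, and $ABD^{\pi}=0$ are given outright, while $DCA^{\pi}=0$ follows trivially from $DC=0$.

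Next I would dispose of Theorem 4.1's condition (2), the group invertibility of the block $\left(\begin{array}{cc} A & AA^{\#}B \\ DD^{\#}C & D \end{array}\right)$. Because $D\in R^{\#}$ satisfies $DD^{\#}=D^{\#}D$, the hypothesis $DC=0$ gives $DD^{\#}C=D^{\#}(DC)=0$, so the block is upper triangular. The opposite-ring version of Lemma 2.1 then reduces its group invertibility to $A^{\pi}(AA^{\#}B)D^{\pi}=0$, which holds automatically since $A^{\pi}AA^{\#}=0$.

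It remains to translate Theorem 4.1's condition (1). One half, $A^{\pi}(BC)^{\pi}A=0$, is already one of the stated conclusions verbatim. For the other half I would prove that, under $DC=0$, the equivalence $D^{\pi}(CB)^{\pi}D=0\iff CBD=0$ holds. The trick is to note that $DC=0$ forces $DD^{\#}CB=D^{\#}(DC)B=0$, so $D^{\pi}\cdot CB=CB$; combined with $D^{\pi}D=0$, this collapses $D^{\pi}(CB)^{\pi}D$ to $-CB(CB)^{\#}D$. The equivalence $CB(CB)^{\#}D=0\iff CBD=0$ then follows from the identity $CB=(CB)^{2}(CB)^{\#}$: multiplying by $(CB)^{\#}$ on the left gives the $(\Leftarrow)$ direction, while multiplying by $CB$ on the left gives the $(\Rightarrow)$ direction.

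The main obstacle I anticipate is this final biconditional. Recognizing the simplification $D^{\pi}CB=CB$ is not immediate, and without it the expression $D^{\pi}(CB)^{\pi}D$ does not visibly reduce to anything tractable. Once that reduction is secured, both implications between $CB(CB)^{\#}D=0$ and $CBD=0$ drop out of routine group-inverse identities, and the desired biconditional characterization of Corollary 4.3 follows by combining everything with Theorem 4.1.
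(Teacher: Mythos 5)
Your proposal is correct and follows essentially the same route as the paper: reduce to Theorem 4.1 by noting $DCA^{\pi}=0$ is trivial, and that $DD^{\#}C=D^{\#}(DC)=0$ makes the block $\left(\begin{smallmatrix} A & AA^{\#}B \\ DD^{\#}C & D\end{smallmatrix}\right)$ triangular with $A^{\pi}(AA^{\#}B)D^{\pi}=0$, hence group invertible. Your explicit verification that, under $DC=0$, $D^{\pi}(CB)^{\pi}D=0$ is equivalent to $CBD=0$ (via $D^{\pi}CB=CB$, $D^{\pi}D=0$ and the identities $CB(CB)^{\#}=(CB)^{\#}CB$, $(CB)^{2}(CB)^{\#}=CB$) supplies a translation step that the paper's one-line proof leaves tacit but which is needed for the stated biconditional.
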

\begin{proof} Obviously, $A^{\pi}(AA^{\#}B)D^{\pi}=0$. Similarly to~\cite[Theorem 3.1]{B}, $\left(
  \begin{array}{cc}
   A&AA^{\#}B \\
   DD^{\#}C &D
  \end{array}
\right)$ has group inverse. There we obtain the result by Theorem 4.1.\end{proof}

\begin{cor} Let $A\in {\Bbb C}^{m\times m}$ and $D\in {\Bbb C}^{n\times n}$ have group inverses. If $r(B)=r(C)=r(BC)=r(CB)$, then \end{cor}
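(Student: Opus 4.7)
The plan is to reduce the result to Theorem 4.1 by verifying that the rank equalities $r(B)=r(C)=r(BC)=r(CB)$ automatically supply the structural hypotheses of that theorem concerning $BC$, $CB$ and their spectral idempotents. The main tool is the classical rank identity $r(XY)=r(Y)-\dim\bigl(\ker(X)\cap\operatorname{Im}(Y)\bigr)$ for complex matrices, applied in both orders to $BC$ and $CB$.

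First I would translate the four equalities into geometric conditions. From $r(BC)=r(B)$ and $r(BC)=r(C)$ one reads off $\operatorname{Im}(BC)=\operatorname{Im}(B)$ and $\ker(B)\cap\operatorname{Im}(C)=0$, which together with the matching dimension count give the direct sum $\mathbb{C}^{n}=\operatorname{Im}(C)\oplus\ker(B)$. The symmetric computation with $CB$ yields $\mathbb{C}^{m}=\operatorname{Im}(B)\oplus\ker(C)$.

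Under these decompositions the restrictions $B|_{\operatorname{Im}(C)}\colon\operatorname{Im}(C)\to\operatorname{Im}(B)$ and $C|_{\operatorname{Im}(B)}\colon\operatorname{Im}(B)\to\operatorname{Im}(C)$ are bijections, so $BC$ acts as an automorphism of $\operatorname{Im}(B)$ and annihilates $\ker(C)$. This simultaneously pins down $\ker(BC)=\ker(C)$ and $r((BC)^{2})=r(BC)$, so $BC\in\mathbb{C}^{m\times m}$ has group inverse, and symmetrically so does $CB$. Moreover $BC(BC)^{\#}$ is the projector onto $\operatorname{Im}(B)$ along $\ker(C)$, hence $(BC)^{\pi}$ takes values in $\ker(C)$, giving $C(BC)^{\pi}=0$; likewise $B(CB)^{\pi}=0$. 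Every hypothesis of Theorem 4.1 concerning $BC$, $CB$, $B(CB)^{\pi}$ and $C(BC)^{\pi}$ is therefore in place, and the desired conclusion about $M$ drops out of Theorem 4.1 together with the remaining hypotheses $ABD^{\pi}=0$ and $DCA^{\pi}=0$ that the statement presumably retains.

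The main obstacle, to the extent there is one, lies in the third step: one must ensure that the dimension-based identifications deliver genuine set-theoretic direct sums rather than mere rank coincidences (so that $\ker(BC)$ really collapses to $\ker(C)$ and the image of $(BC)^{\pi}$ really sits inside $\ker(C)$). Once that biorthogonality is secured, the remainder is routine bookkeeping and the corollary becomes a transparent specialization of Theorem 4.1 under the rank-equality regime characteristic of EP-type configurations.
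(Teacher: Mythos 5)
Your geometric derivation of the structural facts is sound and captures the heart of the paper's argument: from $r(B)=r(C)=r(BC)=r(CB)$ you correctly obtain the decompositions ${\Bbb C}^{n}=\operatorname{Im}(C)\oplus\ker(B)$ and ${\Bbb C}^{m}=\operatorname{Im}(B)\oplus\ker(C)$, hence that $BC$ and $CB$ are group invertible with $C(BC)^{\pi}=0$ and $B(CB)^{\pi}=0$. The paper reaches the same conclusions differently: it quotes a lemma of Bu--Zhao--Zheng for the group invertibility of $BC$ and $CB$, forms $N=\left(\begin{smallmatrix}0&B\\ C&0\end{smallmatrix}\right)$, notes $r(N^{2})=r(BC)+r(CB)=r(B)+r(C)=r(N)$ so that $N$ is group invertible, and reads $B(CB)^{\pi}=0$, $C(BC)^{\pi}=0$ off the identity $NN^{\pi}=0$. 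Your range/kernel argument is more self-contained, which is a genuine (if modest) advantage.

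Where the proposal falls short is the endgame. The corollary's conclusion (truncated in what you saw) is a pair of equivalences: (1) if $AB=0$ and $DCA^{\pi}=0$, then $M$ is group invertible iff $BCA=0$ and $D^{\pi}(CB)^{\pi}D=0$; (2) if $DC=0$ and $ABD^{\pi}=0$, then $M$ is group invertible iff $A^{\pi}(BC)^{\pi}A=0$ and $CBD=0$. These do not ``drop out of Theorem 4.1'' with the hypotheses you presume ($ABD^{\pi}=0$ and $DCA^{\pi}=0$): Theorem 4.1 still leaves its condition (2), namely the group invertibility of $\left(\begin{smallmatrix}A&AA^{\#}B\\ DD^{\#}C&D\end{smallmatrix}\right)$, as part of the characterization, and one must show this block is automatically group invertible under $AB=0$ (resp. $DC=0$), as well as convert the remaining condition of Theorem 4.1(1) into $BCA=0$ (resp. $CBD=0$). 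This is exactly what the paper's Corollaries 4.2 and 4.3 accomplish (using the results of Benitez et al.\ for the corner block), and the paper finishes Corollary 4.4 by citing them rather than Theorem 4.1 directly. So your reduction correctly supplies the hypotheses shared by those corollaries, but the final application as you state it is both misaimed and incomplete.
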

\begin{enumerate}
\item [(1)] If $AB=0$ and $DCA^{\pi}=0$, then $M$ has group inverse if and only if $BCA=0$ and $D^{\pi}(CB)^{\pi}D=0$.
\item [(2)] If $DC=0$ and $ABD^{\pi}=0$, then $M$ has group inverse if and only if $A^{\pi}(BC)^{\pi}A=0$ and $CBD=0$.
\end{enumerate}
\begin{proof} Since $r(B)=r(C)=r(BC)=r(CB)$, by virtue of~\cite[Lemma 2.3]{BZ}, $BC$ and $CB$ have group inverses.
Let $N=\left(
  \begin{array}{cc}
   0&B \\
   C&0
  \end{array}
\right)$. Then $N^2=\left(
  \begin{array}{cc}
   BC&0 \\
   0&CB
  \end{array}
\right).$ Hence, $rank(N^2)=rank(BC)+rank(CB)=rank(B)+rank(C)=rank(N)$.
Therefore $N$ has group inverse. This implies that $$NN^{\pi}=\left(
  \begin{array}{cc}
   0&B(CB)^{\pi} \\
   C(BC)^{\pi}&0
  \end{array}
\right)=0,$$ and so $B(CB)^{\pi}=0, C(BC)^{\pi}=0.$ Therefore we complete the proof by Corollary 4.2 and Corollary 4.3.\end{proof}

We now ready to prove:

\begin{thm} Let $A$ and $D$ have group inverses. If $A^{\pi}B=0, D^{\pi}C=0, BCA^{\pi}=0, ABC=0$ and $BD=BCA^{\#}B$, then $M$
has group inverse.\end{thm}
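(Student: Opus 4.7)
The plan is to extract the essential consequences of the hypotheses, split $M$ as a sum of a ``row'' piece and a ``column'' piece, and then apply Theorem 2.3.

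First, I would simplify the given conditions. Left-multiplying $ABC = 0$ by $A^{\#}$ and using $A^{\pi}B = 0$ (i.e.\ $AA^{\#}B = B$) gives $BC = 0$. The equation $BD = BCA^{\#}B$ then collapses to $BD = 0$, and combining this with $D^{\#} = D^{\#}DD^{\#}$ yields $BD^{\#} = 0$. Thus the working hypotheses become: $A, D \in R^{\#}$, $AA^{\#}B = B$, $DD^{\#}C = C$, $BC = 0$, and $BD = 0$.

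Next I would write $M = P + Q$ with
$$P = \left(\begin{array}{cc} A & B \\ 0 & 0\end{array}\right), \quad Q = \left(\begin{array}{cc} 0 & 0 \\ C & D\end{array}\right).$$
Using $BC = 0$ and $BD = 0$, one checks directly that $PQ = 0$. A short direct verification, exploiting $AA^{\#}B = B$, shows that $P^{\#} = \left(\begin{smallmatrix}A^{\#} & (A^{\#})^{2}B \\ 0 & 0\end{smallmatrix}\right)$, while Lemma 2.1 applied with $a = 0$, $d = D$ (noting $a^{\pi} = 1$ and $D^{\pi}C = 0$) gives $Q^{\#} = \left(\begin{smallmatrix}0 & 0 \\ (D^{\#})^{2}C & D^{\#}\end{smallmatrix}\right)$. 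From $PQ = 0$ together with these formulas I would deduce $P^{\#}Q = 0$ and $PP^{\#}Q = 0$, so that $P^{\pi}Q = Q$, $P(1 + P^{\#}Q) = P \in R^{\#}$, $Q^{\pi}P^{\pi}Q = Q^{\pi}Q = 0$, and $PQP^{\pi} = 0$. These are all the hypotheses of Theorem 2.3 (with $a = P$, $b = Q$) except for $QP^{\pi} \in R^{\#}$.

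The main obstacle will be verifying that $QP^{\pi}$ is group invertible. A direct computation gives
$$QP^{\pi} = \left(\begin{array}{cc} 0 & 0 \\ CA^{\pi} & V\end{array}\right), \qquad V = D - CA^{\#}B.$$
I would propose the explicit formula $V^{\#} = D^{\#} - (D^{\#})^{2}CA^{\#}B$; its verification relies on $BD^{\#} = 0$ (which kills any term ending in $CA^{\#}BD^{\#}$), on $BC = 0$ (which kills $CA^{\#}B \cdot CA^{\#}B$), and on $DD^{\#}C = C$. One then computes $VV^{\#} = V^{\#}V = DD^{\#} - D^{\#}CA^{\#}B$, so $V^{\pi} = D^{\pi} + D^{\#}CA^{\#}B$, and consequently
$$V^{\pi}CA^{\pi} = (D^{\pi}C)A^{\pi} + D^{\#}CA^{\#}(BC)A^{\pi} = 0.$$
Lemma 2.1, applied to $QP^{\pi}$ with $a = 0$, $c = CA^{\pi}$, $d = V$, then yields $QP^{\pi} \in R^{\#}$. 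With all hypotheses of Theorem 2.3 now in place, I conclude that $M = P + Q \in R^{\#}$.
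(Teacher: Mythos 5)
Your argument is correct, and although it uses the same splitting as the paper, $M=P+Q$ with $P=\left(\begin{smallmatrix} A&B\\ 0&0\end{smallmatrix}\right)$ and $Q=\left(\begin{smallmatrix} 0&0\\ C&D\end{smallmatrix}\right)$, and the same final appeal to Theorem 2.3, it differs in two substantive ways. First, your opening reduction is sound: $A^{\pi}B=0$ gives $B=AA^{\#}B$, hence $BC=A^{\#}(ABC)=0$ and then $BD=BCA^{\#}B=0$ (and $BD^{\#}=BD(D^{\#})^{2}=0$). Consequently $PQ=0$, so $PQP^{\pi}=0$, $P(1+P^{\#}Q)=P\in R^{\#}$ and $Q^{\pi}P^{\pi}Q=Q^{\pi}Q=0$ all come for free, whereas the paper checks these block by block using the stated hypotheses $BCA^{\pi}=0$, $ABC=0$ and $BD=BCA^{\#}B$ without observing that they collapse. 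A noteworthy by-product of your reduction is that the hypotheses of this theorem actually force $BC=0$ and $BD=0$, so the statement is equivalent to its Corollary 4.6 (Benitez et al., Theorem 3.4) rather than a strict generalization of it. Second, for the one remaining hypothesis of Theorem 2.3, namely $QP^{\pi}\in R^{\#}$, the paper uses Cline's formula, $[QP^{\pi}]^{D}=Q[(P^{\pi}Q)^{D}]^{2}P^{\pi}=Q^{D}P^{\pi}$, followed by an index check; you instead exhibit the group inverse of $V=D-CA^{\#}B$ explicitly, $V^{\#}=D^{\#}-(D^{\#})^{2}CA^{\#}B$ (your verification via $BD=BD^{\#}=0$, $BC=0$, $DD^{\#}C=C$ is correct), compute $V^{\pi}=D^{\pi}+D^{\#}CA^{\#}B$, check $V^{\pi}CA^{\pi}=0$, and invoke Lemma 2.1 on the lower-triangular form of $QP^{\pi}$. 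Your route is more computational but stays entirely within Lemma 2.1; the paper's Cline-formula step is shorter and avoids producing an explicit inverse. Both are valid proofs.
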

\begin{proof} Write $M=P+Q$, where $$P=\left(
  \begin{array}{cc}
    A&B\\
    0&0
  \end{array}
\right), Q=\left(
  \begin{array}{cc}
    0&0\\
    C&D
  \end{array}
\right).$$
Since $A^{\pi}B=0, D^{\pi}C=0$, we easily see that $P$ and $Q$ have group inverses. Moreover, we have $$\begin{array}{c}
P^{\#}=\left(
  \begin{array}{cc}
  A^{\#}&(A^{\#})^2B\\
  0&0
  \end{array}
  \right), P^{\pi}=\left(
  \begin{array}{cc}
     A^{\pi}&-A^{\#}B\\
    0&I_n
  \end{array}
\right);\\
Q^{\#}=\left(
  \begin{array}{cc}
    0&0\\
    (D^{\#})^2C&D^{\#}
  \end{array}
\right),Q^{\pi}=\left(
  \begin{array}{cc}
    I_m&0\\
    -D^{\#}C&D^{\pi}
  \end{array}
\right).
\end{array}$$
We compute that
$$\begin{array}{lll}
PP^{\#}Q&=&\left(
  \begin{array}{cc}
     AA^{\#}&A^{\#}B\\
    0&0
  \end{array}
\right)\left(
  \begin{array}{cc}
    0&0\\
    C&D
  \end{array}
\right)\\
&=&\left(
  \begin{array}{cc}
    A^{\#}BC&A^{\#}BD\\
    0&0
  \end{array}
\right)\\
&=&0.
\end{array}$$
Then $$[QP^{\pi}]^D=Q[(P^{\pi}Q)^D]^2P^{\pi}=Q(Q^D)^2P^{\pi}=Q^DP^{\pi}.$$ We check that
$[QP^{\pi}]^2[QP^{\pi}]^D=Q^2Q^DP^{\pi}=QP^{\pi}.$ Therefore $QP^{\pi}$ has group inverse.
Moreover, we check that
$$\begin{array}{rll}
PQP^{\pi}&=&\left(
  \begin{array}{cc}
    A&B\\
    0&0
  \end{array}
\right)\left(
  \begin{array}{cc}
    0&0\\
    C&D
  \end{array}
\right)\left(
  \begin{array}{cc}
     A^{\pi}&-A^{\#}B\\
    0&I_n
  \end{array}
\right)\\
&=&\left(
  \begin{array}{cc}
     BCA^{\pi}&BD-BCA^{\#}B\\
    0&0
  \end{array}
\right)\\
&=&0,\\
Q^{\pi}P^{\pi}Q&=&\left(
  \begin{array}{cc}
    I&0\\
    -D^{\#}C&D^{\pi}
  \end{array}
\right)\left(
  \begin{array}{cc}
     A^{\pi}&-A^{\#}B\\
    0&I
  \end{array}
\right)\left(
  \begin{array}{cc}
    0&0\\
    C&D
  \end{array}
\right)\\
&=&\left(
  \begin{array}{cc}
    I&0\\
    -D^{\#}C&D^{\pi}
  \end{array}
\right)\left(
  \begin{array}{cc}
   -A^{\#}BC&-A^{\#}BD\\
    C&D
  \end{array}
\right)\\
&=&\left(
  \begin{array}{cc}
   -A^{\#}BC&-A^{\#}BD\\
    D^{\#}CA^{\#}BC&D^{\#}CA^{\#}BD
  \end{array}
\right)\\
&=&0.
\end{array}$$ Further, we have
$$\begin{array}{lll}
PP^{\#}(P+Q)&=&\left(
  \begin{array}{cc}
     AA^{\#}&A^{\#}B\\
    0&0
  \end{array}
\right)\left(
  \begin{array}{cc}
    A&B\\
    C&D
  \end{array}
\right)\\
&=&\left(
  \begin{array}{cc}
    A+A^{\#}BC&AA^{\#}B+A^{\#}BD\\
    0&0
  \end{array}
\right)\\
&=&\left(
  \begin{array}{cc}
    A&AA^{\#}B\\
    0&0
  \end{array}
\right).
\end{array}$$ Since $A^{\pi}(AA^{\#}B)=0$ and $A$ has group inverse, $PP^{\#}(P+Q)$ has group inverse. Therefore we complete the proof by Theorem 2.3.\end{proof}

\begin{cor} (~\cite[Theorem 3.4]{B}) Let $A$ and $D$ have group inverses. If $A^{\pi}B=0, D^{\pi}C=0, BC=0$ and $BD=0$, then $M$ has group inverse.\end{cor}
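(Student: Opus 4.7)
The plan is to deduce this corollary directly from Theorem 4.5 by verifying that the stronger hypotheses here force each of the hypotheses of that theorem. The hypotheses $A^{\pi}B=0$ and $D^{\pi}C=0$ are assumed in both statements, so there is nothing to check for those two conditions. It therefore remains to verify the three remaining premises $BCA^{\pi}=0$, $ABC=0$, and $BD=BCA^{\#}B$.

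First I would observe that the condition $BC=0$ immediately yields $BCA^{\pi}=0$ and $ABC=0$, since each of these is obtained by multiplying $BC$ on the right or left by an element of the ambient operator algebra. Next, the condition $BD=0$ gives the left-hand side of the identity $BD=BCA^{\#}B$, while $BC=0$ gives $BCA^{\#}B=0\cdot A^{\#}B=0$, so the required equality $BD = BCA^{\#}B$ reduces to $0=0$ and holds trivially. Thus all the hypotheses of Theorem 4.5 are in force.

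Applying Theorem 4.5 then immediately yields that $M$ has a group inverse, which is exactly the assertion of the corollary. The only conceivable obstacle would be a subtle non-triviality in checking one of the hypotheses of Theorem 4.5, but since each reduces to the trivial multiplication of a zero term, there is no real obstacle — the corollary is a direct specialization of the theorem. For completeness I might also note that this recovers Benitez's result \cite[Theorem 3.4]{B} referenced in the statement, confirming that Theorem 4.5 genuinely generalizes the known case by weakening $BC=0$ and $BD=0$ to the single identity $BD=BCA^{\#}B$ together with $BCA^{\pi}=0$ and $ABC=0$.
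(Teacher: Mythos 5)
Your proposal is correct and follows exactly the paper's route: the paper also deduces Corollary 4.6 as an immediate specialization of Theorem 4.5, and your verification that $BC=0$ yields $BCA^{\pi}=0$, $ABC=0$ and (together with $BD=0$) the identity $BD=BCA^{\#}B$ is precisely the routine check the paper leaves implicit.
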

\begin{proof} This is obvious by Theorem 4.5.\end{proof}

\begin{thm} Let $A,D$ and have group inverses. If $CA^{\pi}=0, BD^{\pi}$ $=0, A^{\pi}BC=0, BCA=0$ and $DC=CA^{\#}BC$, then $M$
has group inverse.\end{thm}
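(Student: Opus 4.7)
The plan is to mirror the proof of Theorem 4.5 but with a \emph{column} split of $M$, and to invoke Corollary 2.4 (the opposite-ring dual of Theorem 2.3) in place of Theorem 2.3. Write $M=P+Q$ with
$$P=\left(\begin{array}{cc} A & 0 \\ C & 0 \end{array}\right), \qquad Q=\left(\begin{array}{cc} 0 & B \\ 0 & D \end{array}\right).$$
From $CA^{\pi}=0$ (equivalently $C=CAA^{\#}$) and $BD^{\pi}=0$ one checks by a direct block computation that $P,Q\in R^{\#}$ with
$$P^{\#}=\left(\begin{array}{cc} A^{\#} & 0 \\ C(A^{\#})^{2} & 0 \end{array}\right),\quad P^{\pi}=\left(\begin{array}{cc} A^{\pi} & 0 \\ -CA^{\#} & I \end{array}\right),$$
and analogous formulae for $Q^{\#}$ and $Q^{\pi}$.

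The key intermediate identity is $QPP^{\#}=0$, which plays the role that $PP^{\#}Q=0$ did in the proof of Theorem 4.5. Direct computation gives $QPP^{\#}=\left(\begin{array}{cc} BCA^{\#} & 0 \\ DCA^{\#} & 0 \end{array}\right)$. Using $A^{\#}=A(A^{\#})^{2}$, the hypothesis $BCA=0$ yields $BCA^{\#}=(BCA)(A^{\#})^{2}=0$, and then $DC=CA^{\#}BC$ yields $DCA^{\#}=CA^{\#}(BCA^{\#})=0$. Hence $QPP^{\#}=0$, or equivalently $QP^{\pi}=Q$. Three of the four hypotheses of Corollary 2.4 (with $a=P$, $b=Q$) follow at once: (i) $P^{\pi}QP=\left(\begin{array}{cc} A^{\pi}BC & 0 \\ DC-CA^{\#}BC & 0 \end{array}\right)=0$ by $A^{\pi}BC=0$ and $DC=CA^{\#}BC$; (ii) $(1+QP^{\#})P=P+QPP^{\#}=P\in R^{\#}$; (iii) $QP^{\pi}Q^{\pi}=QQ^{\pi}=0$.

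The main obstacle, just as for $QP^{\pi}$ in the proof of Theorem 4.5, is the remaining hypothesis $P^{\pi}Q\in R^{\#}$. This is handled by the Cline-type formula $(ab)^{D}=a[(ba)^{D}]^{2}b$ applied with $a=P^{\pi}$, $b=Q$: since $QP^{\pi}=Q\in R^{\#}$, we get
$$(P^{\pi}Q)^{D}=P^{\pi}\bigl[(QP^{\pi})^{D}\bigr]^{2}Q=P^{\pi}(Q^{\#})^{2}Q=P^{\pi}Q^{\#}.$$
A short check using $QP^{\pi}=Q$ twice shows $(P^{\pi}Q)^{2}(P^{\pi}Q)^{D}=P^{\pi}Q^{2}\cdot P^{\pi}Q^{\#}=P^{\pi}(Q^{2}P^{\pi})Q^{\#}=P^{\pi}Q^{2}Q^{\#}=P^{\pi}Q$, so $P^{\pi}Q$ has group inverse $P^{\pi}Q^{\#}$. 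All hypotheses of Corollary 2.4 are now verified, whence $M=P+Q\in R^{\#}$.
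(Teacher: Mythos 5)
Your proof is correct and follows essentially the same route as the paper's: the same split $M=P+Q$ into column blocks, the same verification that $QPP^{\#}=0$, the same Cline-formula argument giving $P^{\pi}Q\in R^{\#}$, and the same appeal to Corollary 2.4. The only cosmetic difference is that you observe $(1+QP^{\#})P=P$ directly via $C=CAA^{\#}$, whereas the paper writes this operator as $\left(\begin{smallmatrix} A&0\\ CAA^{\#}&0\end{smallmatrix}\right)$ and argues its group invertibility from $[CAA^{\#}]A^{\pi}=0$; these are the same matrix, so the arguments coincide.
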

\begin{proof} Write $M=P+Q$, where $$P=\left(
  \begin{array}{cc}
    A&0\\
    C&0
  \end{array}
\right), Q=\left(
  \begin{array}{cc}
    0&B\\
    0&D
  \end{array}
\right).$$
Since $CA^{\pi}=0, BD^{\pi}=0$, $P$ and $Q$ have group inverses. Moreover, we have $$\begin{array}{c}
P^{\#}=\left(
  \begin{array}{cc}
    A^{\#}&0\\
    C(A^{\#})^2&0
  \end{array}
\right), P^{\pi}=\left(
  \begin{array}{cc}
    A^{\pi}&0\\
    -CA^{\#}&I
  \end{array}
\right);\\
Q^{\#}=\left(
  \begin{array}{cc}
    0&B(D^{\#})^2\\
    0&D^{\#}
  \end{array}
\right),Q^{\pi}=\left(
  \begin{array}{cc}
     I&-BD^{\#}\\
    0&D^{\pi}
  \end{array}
\right).
\end{array}$$ We check that
$$\begin{array}{lll}
QPP^{\#}&=&\left(
  \begin{array}{cc}
    0&B\\
    0&D
  \end{array}
\right)\left(
  \begin{array}{cc}
    AA^{\#}&0\\
    CA^{\#}&0
  \end{array}
\right)\\
&=&\left(
  \begin{array}{cc}
    BCA^{\#}&0\\
    DCA^{\#}&0
  \end{array}
\right)\\
&=&0.
\end{array}$$
By using Cline's formula, we have
$$\begin{array}{lll}
[P^{\pi}Q]^D&=&P^{\pi}[(QP^{\pi})^D]^2Q\\
&=&P^{\pi}[Q^D]^2Q\\
&=&P^{\pi}Q^D.
\end{array}$$
Since $[P^{\pi}Q]^2[P^{\pi}Q]^D=P^{\pi}Q$, it follows that $P^{\pi}Q$ has group inverse.
Moreover, we verify that
$$\begin{array}{lll}
P^{\pi}QP&=&\left(
  \begin{array}{cc}
    0&A^{\pi}B\\
    0&D-CA^{\#}B
  \end{array}
\right)\left(
  \begin{array}{cc}
    A&0\\
    C&0
  \end{array}
\right)\\
&=&\left(
  \begin{array}{cc}
    A^{\pi}BC&0\\
    (D-CA^{\#}B)C&0
  \end{array}
\right).
\end{array}$$
Moreover, we compute that
$$\begin{array}{lll}
QP^{\pi}Q^{\pi}&=&\left(
  \begin{array}{cc}
    0&B\\
    0&D
  \end{array}
\right)\left(
  \begin{array}{cc}
    A^{\pi}&0\\
    -CA^{\#}&I
  \end{array}
\right)\left(
  \begin{array}{cc}
     I&-BD^{\#}\\
    0&D^{\pi}
  \end{array}
\right)\\
&=&\left(
  \begin{array}{cc}
    -BCA^{\#}&BCA^{\#}BD^{\#}\\
    -DCA^{\#}&DCA^{\#}BD^{\#}
  \end{array}
\right)\\
&=&0.
\end{array}$$ We also see that
$$\begin{array}{lll}
(I+QP^{\#})P&=&(P+Q)PP^{\#}\\
&=&\left(
  \begin{array}{cc}
    A&B\\
    C&D
  \end{array}
\right)\left(
  \begin{array}{cc}
    AA^{\#}&0\\
    CA^{\#}&0
  \end{array}
\right)\\
&=&\left(
  \begin{array}{cc}
    A&0\\
    CAA^{\#}&0
  \end{array}
\right).
\end{array}$$ Since $[CAA^{\#}]A^{\pi}=0$ and $A$ has group inverse, $(I+QP^{\#})P$ has group inverse. Therefore $M$ has group inverse by Corollary 2.4.\end{proof}

\begin{cor} (~\cite[Theorem 3.6]{B}) Let $A$ and $D$ have group inverses. If $CA^{\pi}=0, BD^{\pi}=0, BC=0$ and $DC=0$, then $M$ has group inverse.\end{cor}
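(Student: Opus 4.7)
My plan is to derive Corollary 4.8 as a direct specialization of Theorem 4.7. The hypotheses of Theorem 4.7 are that $A$ and $D$ have group inverses and that the five conditions
\[
CA^{\pi}=0,\quad BD^{\pi}=0,\quad A^{\pi}BC=0,\quad BCA=0,\quad DC=CA^{\#}BC
\]
all hold. Under the hypotheses of Corollary 4.8, two of these conditions ($CA^{\pi}=0$ and $BD^{\pi}=0$) are assumed outright, so the only work is to check the remaining three.

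The verification is immediate from $BC=0$ and $DC=0$. Indeed, from $BC=0$ we instantly get $A^{\pi}BC = A^{\pi}\cdot 0 = 0$ and $BCA = 0\cdot A = 0$. For the last identity, $BC=0$ gives $CA^{\#}BC = CA^{\#}\cdot 0 = 0$, while $DC=0$ gives the left-hand side equal to $0$ as well, so $DC=CA^{\#}BC$ holds trivially.

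Thus every hypothesis of Theorem 4.7 is satisfied, and the conclusion (that $M$ has group inverse) follows at once. Since the reductions are all one-line algebraic checks, there is no real obstacle here; the corollary is a clean specialization. I would phrase the proof essentially as: \emph{``Under $BC=0$ and $DC=0$ we have $A^{\pi}BC=0$, $BCA=0$, and $DC=0=CA^{\#}BC$, so the hypotheses of Theorem~4.7 are fulfilled and $M$ has group inverse.''}
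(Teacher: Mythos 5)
Your proof is correct and follows the same route as the paper, which simply observes that Corollary 4.8 is immediate from Theorem 4.7; you have merely written out the trivial verifications ($BC=0$ gives $A^{\pi}BC=0$, $BCA=0$, and $CA^{\#}BC=0=DC$) that the paper leaves implicit.
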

\begin{proof} This is obvious by Theorem 4.7.\end{proof}

\vskip10mm

\end{document}